\documentclass[11pt]{article}

\usepackage[utf8]{inputenc}
\usepackage[T1]{fontenc}
\usepackage[english]{babel}
\usepackage{amsmath,amssymb,amsthm}
\usepackage{authblk}
\usepackage{geometry}
\usepackage{xcolor}
\usepackage{bbm}
\usepackage{centernot}
\usepackage{enumitem}
\usepackage{hyperref}
\usepackage{tikz}
\usepackage{tkz-graph}
\usetikzlibrary{patterns}
\usetikzlibrary{hobby}

\setlist[itemize]{leftmargin=*}
\setlist[enumerate]{leftmargin=*,label=(\roman*),ref=(\roman*)}

\renewcommand{\le}{\leqslant}
\renewcommand{\leq}{\leqslant}
\renewcommand{\ge}{\geqslant}
\renewcommand{\geq}{\geqslant}

\newcommand{\bbE}{{\ensuremath{\mathbb E}} }

\newcommand{\bbP}{{\ensuremath{\mathbb P}} }

\newcommand{\bbR}{{\ensuremath{\mathbb R}} }

\newcommand{\bbZ}{{\ensuremath{\mathbb Z}} }

\newtheorem{theorem}{Theorem}

\newtheorem{lemma}[theorem]{Lemma}

\theoremstyle{definition}
\newtheorem{definition}[theorem]{Definition}
\newtheorem{remark}[theorem]{Remark}

\newtheorem{claim}[theorem]{Claim}

\numberwithin{theorem}{section}

\newcommand{\1}{\mathbbm{1}}
\newcommand{\nto}{\centernot\leftrightarrow}
\newcommand{\lr}{\leftrightarrow}
\newcommand{\step}{\texttt{STEP}}

\newcommand{\fail}{\texttt{FAIL}}
\newcommand{\select}{\texttt{SELECT}}
\newcommand{\expl}{\texttt{EXPLORATION}}

\DeclareMathOperator{\argmin}{arg\,min}
\DeclareMathOperator{\essinf}{ess\,inf}
\DeclareMathOperator{\Exp}{Exp}

\title{Weighted isoperimetry implies percolation}

\author[1]{Ivailo Hartarsky}
\author[2]{Franco Severo}
\author[3]{Augusto Teixeira}
    \affil[1]{\small CNRS, IRL2924 Jean-Christophe Yoccoz, IMPA, Estrada Dona Castorina, 110 - Rio de Janeiro, Brazil, %Universit\'e Lyon 1, Centrale Lyon, INSA Lyon, Universit\'e Jean Monnet, CNRS, ICJ UMR5208, 69622 Villeurbanne, France, 
    \texttt{hartarsky@math.univ-lyon1.fr}}
    \affil[2]{\small CNRS, Sorbonne Université, 4 place Jussieu, 75005 Paris, France, 
    \texttt{severo@lpsm.paris}}
\affil[3]{\small IMPA, Estrada Dona Castorina, 110 - Rio de Janeiro, Brazil, \texttt{augusto@impa.br}}

\date{\today}

\begin{document}
\maketitle

\begin{abstract}
Consider an infinite edge-weighted graph satisfying an isoperimetric inequality of the type $\|\partial A\|\ge C|A|^\alpha$ for some $\alpha,C>0$, where $\|\partial A\|$ denotes the weighted size of the edge boundary of $A$.
We prove that, for $C$ large enough depending on $\alpha$, if each edge is open independently with probability given by its weight, then any vertex is connected to infinity with positive probability. The result also holds under weaker isoperimetric assumptions and on finite graphs.

The proof brings a new perspective on the recent proof of the Benjamini--Schramm conjecture concerning the same problem with homogeneous weights. 
The crucial novelty in our proof is that, rather than simply counting cutsets, we introduce a new Peierls argument which takes into account internal and external connectivity costs in addition to the cost of the blocking surface.

We provide two applications for the above result.
First, we show that every non-summable long-range percolation on $\bbZ^d$, $d\geq 2$, admits a percolating truncation, solving a conjecture of Sidoravicius, Surgailis and Vares and its generalization by Friedli and de Lima. Secondly, we show that there exists a universal constant $C < \infty$ such that $p_{\mathrm{c}} \leq C/\Delta$ for every transitive graph of superlinear growth and vertex degree $\Delta$, thus proving a conjecture of Easo and Hutchcroft. 
\end{abstract}

\noindent\textbf{MSC2020:} 82B43; 60K35; 05C80; 60C05; 05C70\\
\textbf{Keywords:} percolation; isoperimetry; cutsets; truncation; transitive graphs

\section{Introduction}
\label{sec:intro}

\subsection{Main result}
Let $V$ be either a finite or an infinite countable set and $\lambda=(\lambda_{\{x,y\}})_{\{x,y\}\subset V}$ be a family of non-negative weights. One can see $G=(V,\lambda)$ as a weighted graph, where the edge set is given by $E:=\{\{x,y\}:~\lambda_{\{x,y\}}>0\}$. 
For any set of edges $F\subset E$, we define its total weight by $\|F\|_\lambda:= \sum_{f\in F} \lambda_f$. 
Consider the \emph{weighted isoperimetric profile}
$\psi_\lambda : \{1,\ldots, \lfloor \tfrac{|V|}{2} \rfloor \} \to [0,\infty]$ given by
\begin{equation}\label{eq:psi}
    \psi_\lambda(n):= \inf \Big\{ \|\partial S\|_\lambda :~S\subset V,\, n\leq |S|\leq \min(2n,|V|/2) \Big\},
\end{equation}
where $\partial S:=\{\{x,y\}\in E: x\in S, y\notin S\}$ is the \emph{edge boundary} of $S$. Given two vertices $x,y\in V$, we set
\begin{align}
  \label{eq:chi}
  \chi_\lambda(x,y)&{}=\inf\{\|\partial S\|_\lambda\colon x\in S\subset V\setminus \{y\}\},&\chi_\lambda(x)&=\inf\{\|\partial S\|_\lambda\colon x\in S\text{ finite}\}.
\end{align}
    
We also consider the associated \emph{bond percolation} model on $G=(V,\lambda)$ where each edge $e \in E$ is declared open independently with probability $p_e := 1 - e^{-\lambda_e}$. We write $\omega\subset E$ for percolation configurations and denote the corresponding probability measure by $\bbP_\lambda$. Given two vertices $x,y\in V$, we denote by $x\lr y$ the event that there exists a connected component $C$ of $(V,\omega)$ such that $\{x,y\}\subset C$. We write $x\lr\infty$ if $x\lr y$ occurs for infinitely many $y$.

Our main result is the following.

\begin{theorem}[Weighted isoperimetry implies percolation]
\label{th:main}
Let $G=(V,\lambda)$ be a weighted graph such that
\begin{equation}\label{eq:assumption}
  \Psi^{-1}:=\sum_{k\geq0} \frac{1}{\psi_\lambda(2^k)} < \frac{1}{625}
\end{equation}
Then the following holds, where  $\varepsilon=\varepsilon(\Psi):=25/\sqrt{\Psi}<1$.
\begin{enumerate}
    \item\label{item:finite} If $V$ is finite, then for every $x,y\in V$, we have
\[\bbP_\lambda(x\lr y)\geq 1- e^{-(1-\varepsilon)\chi_\lambda(x,y)
}.\]
    \item\label{item:infinite} If $V$ is infinite, then for every $x\in V$, we have
\[\bbP_\lambda(x\lr \infty)\geq 1- e^{-(1-\varepsilon)\chi_\lambda(x)}.\]
\end{enumerate}
\end{theorem}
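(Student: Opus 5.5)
We argue by a Peierls-type bound on the event that $x$ and $y$ are disconnected, and treat the finite case \ref{item:finite} first. On $\{x\nto y\}$, let $S$ be the open cluster of $x$. Then every edge of $\partial S$ is closed and $S$ is internally connected by open edges. The naive union bound over all possible $S$ gives a factor $e^{-\|\partial S\|_\lambda}$ per set, but there are too many sets to sum. So we do not sum over $S$ directly. Instead we build a coarse-grained description of $S$ through a multiscale exploration, using dyadic scales $2^k$.

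\textbf{Step 1: coarse-graining.} For each scale $k$ we partition (or approximately partition) $S$ into pieces of size between $2^k$ and $2^{k+1}$. The isoperimetric profile gives each piece $P$ the bound $\|\partial P\|_\lambda\ge\psi_\lambda(2^k)$. A piece at scale $k$ should be specified by choosing which boundary edges of pieces at scale $k-1$ are merged.

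\textbf{Step 2: cost accounting.} We weigh the choice of a configuration against three probabilistic costs. First, the boundary $\partial S$ is closed, which costs $e^{-\|\partial S\|_\lambda}$. Second, the internal open connections of $S$ cost a factor equal to the probability that they are open. Third, there is an external connectivity cost. The key estimate I aim for is that the entropy of specifying the merging at scale $k$ is at most $\exp\bigl(O(\|\partial S\|_\lambda/\sqrt{\psi_\lambda(2^k)})\bigr)$, or something of that form. Summing over $k$ with Cauchy--Schwarz-type weights then gives a total entropy of $\exp\bigl(\varepsilon\|\partial S\|_\lambda\bigr)$ with $\varepsilon\sim\Psi^{-1/2}$. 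This matches the form $25/\sqrt\Psi$.

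To carry out this estimate, we use exploration. We reveal edges from $x$ and select the next edge with probability proportional to its weight, in the manner of a Poisson clock on edges with rates $\lambda_e$. The probability of producing a given cluster history then telescopes into a product of terms $\lambda_e/\text{(total boundary weight)}$ times $e^{-\text{(boundary weight)}}$. The weights are thus absorbed into the probabilities, and counting is replaced by summing probabilities, which yields the bound $\bbP(x\nto y)\le e^{-(1-\varepsilon)\chi_\lambda(x,y)}$. Since every set $S$ containing $x$ but not $y$ has $\|\partial S\|_\lambda\ge\chi_\lambda(x,y)$, this gives \ref{item:finite}.

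\textbf{Step 3: infinite case.} For \ref{item:infinite}, we apply the finite argument to finite sets $S\ni x$, or use an exhaustion together with the event that the cluster of $x$ is finite. On that event the cluster is some finite $S$ with closed boundary, and the same bound gives $e^{-(1-\varepsilon)\chi_\lambda(x)}$. Letting the exhaustion grow gives the claim.

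\textbf{Main obstacle.} The hardest step is the entropy bound in Step 2: controlling the number of ways pieces at scale $k$ are built from pieces at scale $k-1$ in terms of $\|\partial S\|_\lambda/\sqrt{\psi_\lambda(2^k)}$, using only weights and no degree bounds. I would first try to charge each merge to boundary weight, choosing a threshold $t_k\approx\sqrt{\psi_\lambda(2^k)}$. Boundary edges are then split into heavy and light: heavy merges are paid for by the internal connectivity cost, and light ones by the isoperimetric surplus.
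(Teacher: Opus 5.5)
There is a genuine gap. The step that carries the theorem, Step 2, is only announced, and in the form you announce it, it cannot work.

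\textbf{The entropy bound is too weak.} Summing a per-scale entropy $O(\|\partial S\|_\lambda/\sqrt{\psi_\lambda(2^k)})$ over $k$ gives $\|\partial S\|_\lambda\sum_k\psi_\lambda(2^k)^{-1/2}$. But $\sum_k\psi_\lambda(2^k)^{-1/2}\ge(\sum_k\psi_\lambda(2^k)^{-1})^{1/2}=\Psi^{-1/2}$, so the inequality goes the wrong way. Under the hypothesis the left side can even diverge: take $\psi_\lambda(2^k)=c(k+1)^{3/2}$ with $c$ large. So no Cauchy--Schwarz weighting can rescue a bound that is uniform in the scale.

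\textbf{Growing one cluster from $x$ produces no dyadic structure.} In the paper the profile enters through a Karger-type contraction of the whole graph. It always selects the \emph{smallest} cluster not containing $x$ or $y$, and contracts along an incident edge chosen with probability proportional to its weight. This gives $\bbP(\Pi=\pi)=\bbE_{\mu_\pi}[\prod_t r_t/b_t]$, where $b_t$ and $r_t$ are the total and non-$\pi$ boundary weights of the selected cluster. The accounting is then done per vertex $z$ incident to $\pi$, along the successive times $t_i$ at which its cluster is selected; cluster sizes at least double between these times, which is where $\psi^*_\lambda(2^k)$ enters through the bounds $p_i$. At each such time there are two cases:
either $r_{t_i}\ge p_i/4$, which costs $4a_{t_i}/p_i$ and is summable;
or $r_{t_i}<p_i/4$, in which case $b_{t_{i+1}}\ge b_{t_i}+p_i/2$, and the cost $\tfrac43 a_{t_i}f(b_{t_i})$, with $f(u)=\log(eu)/u$, sums to $O((\sum_i p_i^{-1})^{1/2})$ precisely because $b$ grows.
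That is where the square root comes from.

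\textbf{The connectivity part misses its key idea, and your heavy/light split is backwards.} Requiring an open crossing of a cut of weight $r$ gains only a factor $1-e^{-r}\le\min(r,1)$. So connectivity can pay only for steps with non-$\pi$ boundary weight $r_t<1$. The threshold must therefore be $1$, not $\sqrt{\psi_\lambda(2^k)}$, and heavy steps must be paid for by isoperimetry. The gain is also needed on both sides of the cutset. Using $\partial S$ for the open cluster $S$ of $x$ gives nothing on the $y$ side, and $\partial S$ need not even be a minimal cutset.

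\textbf{The paper's fix is cohesion.} A minimum-weight closed cutset $\pi$ has the property that every cut of weight $<1$ inside $V_x$ or inside $V_y$ meets $\omega$. Indeed, switching a side across such a cut would lower the weight, since $\inf_k\psi^*_\lambda(2^k)\ge 2$. The contraction is then coupled with exponential clocks, $\omega=\{P_e\le 1\}$, and clocks are reset by memorylessness so that they only decrease. This makes cohesion imply that the $\pi$-avoiding exploration never fails. Meanwhile, failure at step $t$ has probability at least $\max(0,1-r_t)$. Combining this with the independence of success from $\{\pi\cap\omega=\varnothing\}$ yields
\[
\bbP_\lambda(\pi\text{ cohesive})\le e^{-(1-\varepsilon)\|\pi\|_\lambda}\,\bbP(\Pi=\pi),
\]
which is the summable Peierls bound. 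Without an inequality of this kind, the claim that the weights ``are absorbed into the probabilities'' does not relate $\bbP_\lambda(x\nto y)$ to any probability measure, and the argument is not closed.
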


\begin{remark}[Isoperimetric profile]
\label{rem:psi~phi}
The function $\psi_\lambda$ in \eqref{eq:psi} is related to the following more standard function
\begin{equation}\label{eq:phi}
    \phi_\lambda(n):= \inf \Big\{ \|\partial S\|_\lambda :~S\subset V,\, n\leq |S|\leq |V|/2 \Big\}.
\end{equation}
Obviously, $\phi_\lambda(n)=\inf_{m\geq n}\psi_\lambda(m) \leq \psi_\lambda(n)$, so Theorem~\ref{th:main} also holds with $\psi_\lambda$ replaced by $\phi_\lambda$. However, for technical reasons, we will use the version with $\psi_\lambda$ when deducing Theorem~\ref{th:transitive} below. 
\end{remark}

\begin{remark}[Isoperimetric dimension]\label{rem:d>1}
Assumption \eqref{eq:assumption} is satisfied, in particular, if there exists $\alpha\in(0,1]$ such that $\psi_{\lambda}(n)\geq \frac{1250}{\alpha} n^\alpha$ for all $n\geq1$. Recall that when $\psi_{\lambda}(n)\geq c n^\alpha$ for some $c>0$, we say that $G$ satisfies a (weighted) \emph{isoperimetric inequality of dimension $d=1/(1-\alpha)$}.
\end{remark}

\begin{remark}[Surface-order exponential decay]\label{rem:surf_decay}
As it is common for perturbative results, our proof also yields surface-order exponential decay. For instance, if $V$ is infinite and \eqref{eq:assumption} holds, then for all $x\in V$
\[\bbP_\lambda(n\le |\{z\in V:x\lr z\}|<\infty)\le e^{-(1-\varepsilon)\phi_\lambda(n)}.\]
\end{remark}

\begin{remark}[Giant and connectivity for finite graphs]
For finite graphs, one may use Theorem~\ref{th:main}\ref{item:finite} (see also Remark~\ref{rem:surf_decay}) to deduce an upper bound on the threshold for the emergence of a unique giant component from isoperimetric information. Furthermore, a union bound over $y$ in Theorem~\ref{th:main}\ref{item:finite} can be used to recover sharp full connectivity results. For many regular graphs of degree $d$ and homogeneous weights $\lambda_0\1_E$, one can obtain a giant for $\lambda_0= C/d$, and full connectivity for $\lambda_{0}=(1+\epsilon)\log|V|/d$, where the latter is often sharp as it is related to the non-existence of isolated vertices. For instance, this applies to complete graphs, hypercubes, random regular graphs, etc. We refer to \cite{DEKK,Joo,DG} for recent related works.
\end{remark}

A homogeneous version of Theorem~\ref{th:main} (that is for $\lambda_e=\lambda_0 \1_{e\in E}$ for some $\lambda_0>0$) was recently proved by an AI \cite{Cut26a,Cut26b}, thereby settling, in particular, the ``$p_{\mathrm{c}}<1$ question'' of Benjamini and Schramm \cite[Question~2]{BenSch96} on graphs with isoperimetric dimension $d>1$. See also the predecessors \cite{DGRSY20} and \cite{EST25} where this conjecture was verified for $d > 4$ and $d > 2$, respectively.

\subsection{Applications}\label{sec:applications}

We next gather two applications of Theorem~\ref{th:main}. Let us remark that \cite{Cut26a,Cut26b} do not suffice for the applications below, including in the case of homogeneous weights. Indeed, as we explain in Section~\ref{sec:ideas}, their proof is based on cutset counting, which necessarily degenerates as $\lambda\to 0$.

\paragraph{Truncation of long-range percolation.}
We first turn to solving the truncation problem for non-summable long-range percolation, using Theorem~\ref{th:main}.
\begin{theorem}[Percolating truncation always exists]
\label{th:truncation}
  Fix the vertex set $V=\bbZ^d$, with $d \ge 2$. Let $\lambda=(\lambda_{\{x,y\}})_{x,y\in \bbZ^d}$ be   translation-invariant family of weights, that is, $\lambda_{\{x,y\}}=\lambda_{y-x} =\lambda_{x-y}$ for all $x,y\in\bbZ^d$. Assume that $X:=\{x\in\bbZ^d: \lambda_{x}>0\}$ is not contained in the line $\bbR x$ for any $x\in\bbR^d$ and that $\sum_{x\in X}\lambda_{x}=\infty$. Then there exists $M\in[1,\infty)$ such that, setting $\lambda^M_{\{x,y\}}=\lambda_{\{x,y\}}\1_{\|x-y\|_1\le M}$, we have
\[\bbP_{\lambda^M}(0\lr\infty)>0.\]
\end{theorem}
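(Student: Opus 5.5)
The plan is to apply Theorem~\ref{th:main}\ref{item:infinite} to $G=(\bbZ^d,\lambda^M)$ for a suitably large $M$. For that we need to check the summability assumption \eqref{eq:assumption} for $\psi_{\lambda^M}$. Note that $\chi_{\lambda^M}(0)\ge \inf_n\phi_{\lambda^M}(n)>0$ will come for free from the same estimates. Theorem~\ref{th:main} then gives $\bbP_{\lambda^M}(0\lr\infty)\ge 1-e^{-(1-\varepsilon)\chi_{\lambda^M}(0)}>0$, since $\varepsilon<1$.

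The starting point is the identity
\[\|\partial S\|_{\lambda^M}\;\ge\;\tfrac12\sum_{x\in X,\,\|x\|_1\le M}\lambda_x\,|S\setminus (S-x)|,\]
where the factor $\tfrac12$ accounts for each edge being counted through $x$ and $-x$. We then use two lower bounds on $|S\setminus(S-x)|$ for a finite set $S$ with $|S|=n$.

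\emph{Small sets.} Trivially $|S\setminus(S-x)|\ge 1$ for every $x\neq0$, since the point of $S$ maximizing $\langle\cdot,x\rangle$ is not in $S-x$. Hence $\|\partial S\|_{\lambda^M}\ge \tfrac12 W_M$ for all $n$, where $W_M:=\sum_{\|x\|_1\le M}\lambda_x$. By non-summability, $W_M\to\infty$ as $M\to\infty$.

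\emph{Large sets.} By hypothesis we can fix two linearly independent $x_0,y_0\in X$; set $c_0=\min(\lambda_{x_0},\lambda_{y_0})>0$ and take $M\ge\max(\|x_0\|_1,\|y_0\|_1)$. Decompose $S$ along the cosets $z+\bbZ x_0+\bbZ y_0$ of the rank-two lattice. In a coset containing $n_i$ points of $S$, let $a_i$ be the number of lines parallel to $x_0$ meeting $S$ and $b_i$ the number parallel to $y_0$. Then $n_i\le a_ib_i$, which is the two-dimensional Loomis--Whitney inequality, elementary in lattice coordinates. Each line meeting $S$ contributes at least one point of $S\setminus(S-x_0)$, respectively of $S\setminus(S-y_0)$. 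Therefore
\[|S\setminus(S-x_0)|+|S\setminus(S-y_0)|\ge\sum_i(a_i+b_i)\ge \sum_i 2\sqrt{n_i}\ge 2\sqrt n,\]
using concavity (subadditivity) of the square root. This gives $\|\partial S\|_{\lambda^M}\ge c_0\sqrt n$, uniformly in $M$.

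\emph{Summation.} Combining the two bounds, $\psi_{\lambda^M}(2^k)\ge\max(\tfrac12W_M,\,c_0 2^{k/2})$. First choose $K$ so that $\sum_{k\ge K}c_0^{-1}2^{-k/2}<\tfrac1{1250}$. This is possible because the large-set bound does not depend on $M$, so $K$ depends only on $c_0$. Then choose $M$ so large that $2K/W_M<\tfrac1{1250}$. This makes $\Psi^{-1}<\tfrac1{625}$ and verifies \eqref{eq:assumption}. Positivity of $\chi_{\lambda^M}(0)$ follows from the same bound: any finite $S\ni0$ has $\|\partial S\|\ge \tfrac12W_M>0$.

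The main obstacle is the large-set isoperimetry. Since $X$ may be extremely degenerate, for instance almost all the weight on one line, one cannot hope for a genuine $d$-dimensional inequality with a large constant. The key observation is that the decoupling handles this: the large constant comes only from the small-set bound via $W_M$, while large sets need only some fixed dimension-$2$ inequality with a possibly tiny constant $c_0$. That tiny constant is absorbed by starting the tail sum at a large $K$. The care needed is in the coset decomposition and the Loomis--Whitney step when $x_0,y_0$ generate a sublattice of $\bbZ^d$ that is not primitive. This is harmless, because the argument only uses the affine lines $s+\bbZ x_0$ and $s+\bbZ y_0$.
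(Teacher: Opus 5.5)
Your proof is correct, and it takes a genuinely different route from the paper.

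\textbf{The paper's route.} The paper proves Lemma~\ref{prop:truncation}: a two-dimensional inequality $\psi_{\lambda^M}(n)\ge C_M\sqrt n$, valid at all scales, with $C_M\to\infty$. It does so by a case split. If some line carries infinite weight, the projection argument gives $C_M=4\sqrt{\Lambda_M\lambda_y}$. Otherwise $\lambda$ is decomposed greedily into finitely supported pieces $\lambda^i$, each living on two directions, whose constants $4\lambda^i_{y_i}$ are non-summable. The paper then takes $M$ with $C_M\ge 2500$.

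\textbf{Your route.} You separate scales instead. The extreme-point observation gives the bound $\|\partial S\|_{\lambda^M}\ge\frac12 W_M$, which is large but does not grow with $|S|$. A single fixed pair $x_0,y_0$ gives $c_0\sqrt{|S|}$, with a possibly tiny constant independent of $M$. Since \eqref{eq:assumption} is only a summability condition, you can choose $K$ from $c_0$ first and then $M$ from $K$.

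\textbf{Comparison.} Your approach removes the case analysis and the decomposition altogether. It uses only the divergence of $W_M$ and the existence of two independent directions in $X$. The cost is a weaker isoperimetric conclusion: your profile has a large constant only at small scales, whereas the paper obtains a genuine $\sqrt n$ profile with diverging constant. Your closing remark is therefore too pessimistic. A two-dimensional inequality with diverging constant is always available, as the paper shows; your decoupling simply makes it unnecessary.

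\textbf{Two harmless slips.} First, the factor $\frac12$ in your starting inequality is not needed. Each boundary edge $\{u,v\}$ with $u\in S$, $v\notin S$ is counted exactly once, for $x=v-u$, so equality holds without it. Second, $x=0$ should be excluded from $W_M$. Neither affects the argument.
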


Theorem~\ref{th:truncation} was first conjectured by Sidoravicius, Surgailis and Vares \cite{SSV} in 1999 on $\bbZ^2$ with additional hypotheses, but remained open until now. Over the years, several particular cases of Theorem~\ref{th:truncation} were proved \cite{SSV,MSV,FdLS,FdL,dLS,CdL,Bau} assuming various degrees of symmetry or structure in the weights $\lambda$ and stronger non-summability or $d\ge 3$ assumptions. The most general previous result (though not encompassing the others) is the one of \cite{Bau} proving Theorem~\ref{th:truncation} when $d\ge 3$ and $\lambda$ is invariant under reflection in each of the coordinate hyperplanes. We also direct the reader to \cite{vEdLV,AHdLV} for oriented versions of the problem.

\paragraph{Critical probability of transitive graphs. }
Our second application of Theorem~\ref{th:main} concerns percolation on transitive graphs of large degree (such as high-dimensional lattices, spread-out percolation). Recall that the critical point (in standard parametrization) for bond percolation on an infinite connected graph $G=(V,E)$ can be defined as \[p_{\mathrm{c}}^{\mathrm{bond}}(G):=\inf \Big\{p\in[0,1]:\bbP_{-\log(1-p)\1_E}(x\lr\infty)>0\Big\},\]
where the definition does not depend on $x\in V$.
\begin{theorem}[Large average degree implies percolation]
\label{th:transitive}
There exists a universal constant $C<\infty$ such that if $G=(V,E)$ is a transitive graph with superlinear growth\footnote{A transitive graph $G$ is said to have \emph{superlinear growth} if $\sup_{n\ge 1}|B_n^G|/n=\infty$, where $B_n^G$ is a ball of radius $n$.} and vertex degree $\Delta$, then $p_{\mathrm{c}}^{\mathrm{bond}}(G)\leq 1-e^{-C/\Delta}\leq C/\Delta$.
\end{theorem}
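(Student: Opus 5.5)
We derive Theorem~\ref{th:transitive} from Theorem~\ref{th:main}\ref{item:infinite} with homogeneous weights $\lambda_e=\lambda_0\1_{e\in E}$ and $\lambda_0=C/\Delta$. The task is to check assumption \eqref{eq:assumption} for $C$ a large universal constant, that is,
\[\sum_{k\ge0}\frac{1}{\lambda_0\,\psi_E(2^k)}<\frac1{625},\]
where $\psi_E$ denotes the unweighted profile (number of boundary edges). Once this holds, Theorem~\ref{th:main} gives $\bbP_\lambda(x\lr\infty)\ge 1-e^{-(1-\varepsilon)\chi_\lambda(x)}>0$. Here $\chi_\lambda(x)>0$ because every finite set containing $x$ has at least one boundary edge in an infinite connected graph. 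Hence $p_{\mathrm c}^{\mathrm{bond}}\le 1-e^{-C/\Delta}$.

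The verification splits into two regimes of $n=2^k$.

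\textbf{Small sets, $n\le \Delta/2$ or so.} Each vertex of $S$ has at least $\Delta-|S|+1$ neighbours outside $S$. Hence $|\partial S|\ge |S|(\Delta-|S|)\ge n\Delta/3$ for $n\le 2n\le$ a small multiple of $\Delta$, using $|S|\le 2n$. This is exactly where the two-sided window in $\psi$ (as opposed to $\phi$) is used, as Remark~\ref{rem:psi~phi} anticipates. In this regime $\lambda_0\psi_E(n)\gtrsim Cn$, and the corresponding part of the sum is at most $O(1/C)$.

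\textbf{Large sets, $n\gtrsim \Delta$.} Here we need a uniform isoperimetric inequality for transitive graphs of superlinear growth, of the form
\[|\partial S|\ge c\,\Delta\,\min\bigl(|S|/\Delta,\ (|S|/\Delta)^{1/2}\bigr)\]
or something comparable, uniform in $\Delta$. The natural tool is the Coulhon--Saloff-Coste inequality $|\partial S|\ge c|S|/\rho(2|S|)$, with $\rho(m)=\min\{r:|B_r|\ge m\}$, where the constant carries a factor of the degree because each vertex moves along $\Delta$ edges. Combined with the fact that superlinear growth forces $|B_r|\gtrsim r^2$ in a degree-uniform way, this should give $|\partial S|\gtrsim \Delta\sqrt{|S|/|B_1|}$, up to handling the scale at which quadratic growth kicks in. The required input is a growth gap theorem: transitive graphs of superlinear growth satisfy $|B_r|\ge c\,r^2$ for the relevant scales, with control by $\Delta$. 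I would invoke the structure results of Trofimov/Gromov, or better the quantitative growth theorems of Tessera--Tointon and Easo--Hutchcroft, which give that if $|B_n|\le \epsilon n^2|B_1|$ at some scale then the graph is quasi-one-dimensional. The sum $\sum_k (C\,2^{k/2}/\sqrt\Delta)^{-1}$ over $2^k\gtrsim\Delta$ is then $O(1/C)$, so taking $C$ large finishes the proof.

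\textbf{Main obstacle.} The hard step is the large-set regime, specifically obtaining an isoperimetric inequality of dimension $>1$ whose constants are \emph{uniform in the degree $\Delta$} (with the factor $\Delta$ correctly scaled). Naive Coulhon--Saloff-Coste only gives constants depending on the graph. The first thing I would try is to rescale by $\Delta$ and apply the uniform finitary growth-gap results (Tessera--Tointon) to get $|B_r|\ge c\,r^2|B_1|$ for all $r\ge1$ uniformly. If that fails at intermediate scales, I would use Easo--Hutchcroft's uniform isoperimetry for superlinear-growth transitive graphs, which I expect to be exactly the input needed to reach the summable profile \eqref{eq:assumption}.
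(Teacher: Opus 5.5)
Your proposal is correct and follows the paper's route. For $|S|\lesssim\Delta$ you use the trivial count $|\partial S|\ge|S|(\Delta-|S|+1)$, which is exactly why $\psi$ rather than $\phi$ is needed. For larger sets you feed the uniform Tessera--Tointon growth bound $|B_r|\ge\varepsilon_1\Delta r^2$ (all $r\ge1$) into a Coulhon--Saloff-Coste-type inequality to get $|\partial S|\ge c\sqrt{\Delta|S|}$, after which both parts of the sum in \eqref{eq:assumption} are $O(1/C)$.

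One slip: the Coulhon--Saloff-Coste constant carries no factor of $\Delta$. The degree enters only through $\rho(2|S|)\lesssim\sqrt{|S|/\Delta}$; an extra $\Delta$ would give $\Delta^{3/2}\sqrt{|S|}$, contradicting the paper's $\bbZ^2\times(\bbZ/k\bbZ)$ example. Your stated target $\Delta\sqrt{|S|/|B_1|}\asymp\sqrt{\Delta|S|}$ is nonetheless the right one.
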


Theorem~\ref{th:transitive} confirms \cite[Conjecture 7.3]{EH23}.  
The fact that $p_{\mathrm{c}}(G)<1$ for transitive graphs of superlinear growth was conjectured in \cite{BenSch96} and first proved in \cite{DGRSY20}. The existence of a gap at $1$ (i.e.~a universal $\varepsilon>0$ such that $p_{\mathrm{c}}(G)\leq 1-\varepsilon$ for every such graph), was first proved for Cayley graph in \cite{PS23} (including site percolation), and for transitive graphs in \cite{EST25} (only for bond percolation). In fact, combining the well-known bound $p_{\mathrm{c}}^{\mathrm{site}}(G)\leq 1-(1-p_{\mathrm{c}}(G))^{\Delta-1}$ from \cite{GriSta98} with our Theorem~\ref{th:transitive} establishes a gap at $1$ for site percolation on transitive graphs, which was open until now.

We recall the classical fact that $p_{\mathrm{c}}(G)\geq 1/(\Delta-1)$ for any graph of maximum degree $\Delta$. Asymptotic analogues of Theorem~\ref{th:transitive} were proved with a constant $C\to 1$ as $\Delta\to\infty$ for spread-out percolation on $\bbZ^d$ \cite{Pen,BJR} and on transitive graphs of polynomial growth \cite{ST}. However, it is known \cite[Figure~5]{EH23} that $C$ cannot be taken arbitrarily close to $1$ for general transitive graphs, even as $\Delta\to\infty$.

\subsection{Outline of the proof}\label{sec:ideas}

We now discuss the main ideas in the proof of Theorem~\ref{th:main}. For simplicity we assume that $V$ is finite (the case infinite $V$ follows by restricting and wiring the boundary), and we aim at upper bounding $\bbP[x\nto y]$ uniformly in $x,y\in V$. 

In order to better appreciate the challenges and innovations of this work, we start by recalling the approach introduced in \cite{EST25} and further developed in \cite{Cut26a,Cut26b}, which is specific to the homogeneous case $\lambda=\lambda_0\1_E$. The first step, which can be traced back to the work of Peierls \cite{Pei36}, is to reduce the problem to counting (minimal) cutsets in the graph $G=(V,E)$. More precisely, first notice that the event $\{x\nto y\}$ exactly corresponds to the existence of a cutset $\pi\subset E$ separating $x$ from $y$ (see Definition~\ref{def:cutset}) which is $\omega$-closed (i.e.~$\omega\cap\pi=\emptyset$). In particular, $\bbP_\lambda[x\nto y]\leq\sum_{\pi}\bbP_\lambda[\pi \text{ is } \omega\text{-closed}]$. The second step is to prove that there exists a constant $C<\infty$ such that $|\{\pi:~|\pi|\leq n\}|\leq e^{Cn}$ for all $n$.  Since $\bbP[\pi \text{ is } \omega\text{-closed}]=e^{-\|\pi\|_\lambda}=e^{-\lambda |\pi|}$ in the homogeneous case, the result would follow readily by taking $\lambda_0>C$ large enough. 
However, counting cutsets on general graphs can be a very challenging combinatorial problem. The main idea of \cite{EST25} is to do so via a probabilistic method, namely by carefully constructing a random cutset $\Pi$ such that $\bbP[\Pi=\pi]\geq e^{-C|\pi|}$ for all $\pi$. Such a random cutset is built in \cite{EST25} by using the simple random walk on $G$ and assuming that it is transient (which is known to be implied by an isoperimetric inequality of dimension $d>2$). In \cite{Cut26a,Cut26b}, such a random cutset was obtained via a clever modification of Karger's algorithm \cite{Kar}, and only required a summability assumption such as \eqref{eq:assumption}. The algorithm is very simple: start with the vertex set $V$, where each site is seen as a singleton-cluster, then at each step select a cluster \emph{of smallest size} out of those not containing $x$ or $y$, take a uniformly chosen random edge between this cluster and its neighboring ones, and then join them into a single cluster. Iterating this process, one ends up with only two clusters (of $x$ and $y$), and $\Pi$ is precisely the edges between them.

For general inhomogeneous weights such as in Theorem~\ref{th:main}, estimating the expected number of $\omega$-closed cutsets as above has no chance to work, even in the particular contexts of Theorems~\ref{th:truncation} and \ref{th:transitive}. Indeed, consider the cutsets $\pi$ (between $x$ and $\infty$) obtained from the edges incident to $x$ and $z$, for all vertices $z$ such that $\{x,z\}\in E$. If $\lambda=\lambda^M$ as in Theorem~\ref{th:truncation}, these cutsets satisfy $\|\pi\|_{\lambda^M}\leq 2\Lambda_M$, where $\Lambda_M:=\sum_{\|w\|_1 \leq M} \lambda_w$. If $\lambda_w>0$ for all $w$, there are at least $|B_M^{\bbZ^d}|-1$ such cutsets, thus $\sum_{\pi}\bbP_{\lambda^M}[\pi \text{ is } \omega\text{-closed}]\geq cM^d e^{-2\Lambda_M}$, which may diverge if $\Lambda_M=o(\log M)$ as $M\to\infty$. In the context of Theorem~\ref{th:transitive}, setting $\lambda:=\frac{C}{\Delta}\1_E$, we have $\|\pi\|_{\lambda}\leq 2C$ for these cutsets, thus $\sum_{\pi}\bbP_{\lambda}[\pi \text{ is } \omega\text{-closed}]\geq \Delta e^{-2C}$, which diverges as $\Delta\to\infty$. One can check that such divergences may occur not only for these small cutsets, but also for arbitrarily large ones.

The main idea of our proof is to introduce the notion of \emph{cohesion}.
We say that a cutset $\pi$ is $\omega$-cohesive if, in addition to being $\omega$-closed, none of its sides contains a small cut (i.e.~with total weight $<1$) which is not crossed by $\omega$ (see Definition~\ref{def:cohesive}). 
First, we observe that $\{x\nto y\}$ happens if and only if there exists a $\omega$-cohesive cutset (see Lemma~\ref{lem:existence}). 
We then use the same algorithm as the one of \cite{Cut26a} described above, but where edges are selected with probability proportional to their weights.
This yields a random cutset $\Pi$ for which we prove that 
\begin{equation}\label{eq:goal}
\bbP[\Pi=\pi]\ge e^{(1-\varepsilon) \|\pi\|_\lambda} \bbP_\lambda[\pi \text{ is } \omega\text{-cohesive}]
\end{equation} 
for all $\pi$. Since by \eqref{eq:chi} we have $\|\pi\|_\lambda \geq \chi_\lambda(x,y)$ for all $\pi$, the bound \eqref{eq:goal} readily implies $\bbP_\lambda[x\nto y]\le \sum_{\pi}\bbP_\lambda[\pi \text{ is } \omega\text{-cohesive}] \le \sum_{\pi} e^{-(1-\varepsilon)
\|\pi\|_\lambda}\bbP[\Pi=\pi]\le e^{-(1-\varepsilon)\chi_\lambda(x,y)}$, thus concluding the proof.

In order to prove \eqref{eq:goal}, we couple the algorithm with the percolation process via the following Poissonazation trick. Consider the edge-marks $(P_e)_{e\in E}$ given by independent exponential random variables with parameters $(\lambda_e)_{e\in E}$. First, note that $\omega=\1_{P_e\leq1}$ has law $\bbP_\lambda$. Second, the law of the aforementioned algorithm can be obtained from the same $(P_e)_{e\in E}$ by merging along the edge on the boundary of the selected cluster with minimal mark. In order to preserve the exact law of the algorithm, at each step, we erase some negative information acquired on remaining edges. When estimating the probability that $\Pi=\pi$, we note that there are two kinds of steps: either the non-$\pi$ boundary of the selected cluster has total weight at least $1$, or smaller than~$1$. On the one hand, the probability of not merging along $\pi$ for all steps of the first type can be estimated exactly as in \cite{Cut26b}, using the assumption \eqref{eq:assumption} on the isoperimetric profile, thus leading to a lower bound of the form $e^{-\varepsilon\|\pi\|_\lambda}=e^{(1-\varepsilon)}\|\pi\|_\lambda\bbP[\pi \text{ is } \omega\text{-closed}]$. On the other hand, for steps of the second type, our construction essentially guarantees that the selected edge does not belong to $\pi$ when it is $\omega$-cohesive. As a consequence, the second type of steps generates an additional probability cost, whose total can be lower bounded by $\bbP_\lambda[\pi \text{ is } \omega\text{-cohesive} | \pi \text{ is } \omega\text{-closed}]$. These two estimates combined imply \eqref{eq:goal}. In retrospect, this construction of the algorithm is closer to Kruskal's minimum spanning tree algorithm \cite{Kru}, which actually inspired \cite{Kar}.

Finally, let us mention that deducing Theorems~\ref{th:truncation} and \ref{th:transitive} from Theorem~\ref{th:main} amounts to proving a simple isoperimetric inequality in two dimensions (Lemma~\ref{prop:truncation}) and deducing one  (Lemma~\ref{lem:transitive}) on general transitive graphs from geometric group theoretic results \cite{TT24,TT20}. These proofs are short and presented in Sections~\ref{sec:truncation} and \ref{sec:transitive}, respectively.

\section{Proof of Theorem~\ref{th:main}}
\label{sec:main}
In this section we prove Theorem~\ref{th:main} starting with the finite graph case. Assume $V$ is finite and fix $x,y\in V$.
For the purposes of dealing with infinite graphs later, we will work with the following (weaker) version of $\psi$ avoiding $x$ and $y$:
\begin{equation}
  \label{eq:def:psi:star}
  \psi^*_\lambda(n)
  = \inf \Big\{ \|\partial S\|_{\lambda}: S \subset V \setminus \{x, y\}, n \le |S| \le \min(2n, |V|/2) \Big \}
  \ge \psi_\lambda(n),
\end{equation}
for $n \in\{ 1, \dots, \lfloor |V|/2 \rfloor\}$. 
The proof will only rely on the fact that 
\begin{equation}\label{eq:def_Psi^star}
    (\Psi^*)^{-1}:=\sum_k 1/\psi^*_\lambda(2^k) < 1/25^2
\end{equation}
and
$\phi_\lambda(1)=\inf_k \psi_\lambda(2^k)\geq \Psi > 25^2$, 
which are both consequences of our assumption \eqref{eq:assumption}.

\begin{definition}[Cutset]
  \label{def:cutset}
  A \emph{cutset} (between $x$ and $y$) is an inclusion minimal subset $\pi$ of $E$ with the property that $x$ and $y$ are not connected in the graph $(V,E\setminus\pi)$. Given a cutset $\pi$, we denote by $V_x$ and $V_y$ the vertex sets of the connected components in $(V,E\setminus\pi)$ containing  $x$ and $y$, respectively. Let $E_x$ and $E_y$ denote the edge sets of the graphs induced by $V_x$ and $V_y$, respectively. Note that, by minimality, for any cutset $\pi$, we have $V_x\sqcup V_y=V$.
\end{definition}

\begin{definition}[$\omega$-cohesive cutset]
\label{def:cohesive}
Given $\omega \subset E$, a cutset $\pi$ is said to be $\omega$-\emph{cohesive}, if the following events occur.
\begin{itemize}
  \item $\pi\cap\omega=\varnothing$,
  \item for any $z\in\{x,y\}$ and $F\subset E_z$ such that $(V_z,E_z\setminus F)$ is disconnected and $\|F\|_\lambda< 1$, we have $\omega\cup F\neq \emptyset$.
\end{itemize}
\end{definition}
In words, an $\omega$-cohesive cutset is one which is closed and such that, in both sides of it, all small cuts meet $\omega$ (see Figure~\ref{fig:cutset}).

\begin{remark}[Cohesion]
To digest Definition~\ref{def:cohesive}, let us consider the homogeneous case of $\lambda_e=\lambda_0$ for all $e\in E$. If $\lambda_0>1$, a cutset is $\omega$-cohesive, iff it is closed (the second condition is void). If $\lambda_0\in(1/2,1)$, a cutset is $\omega$-cohesive, iff it is closed and all bridges (in the graph theoretic sense of the term) of $(V,E\setminus\pi)$ belong to $\omega$. If $G$ is planar and $\lambda_0\in(1/2,1)$, $\omega$-cohesive cutsets correspond to \emph{induced cycles} (also known as holes) separating $x$ and $y$ in the dual configuration $(V^*,\omega^*)$, defined by $\omega^*_{e^*}=1-\omega_e$. Lower values of $\lambda_0$ correspond to higher order notions involving pockets accessible by multiple edges.
\end{remark}

\begin{figure}
  \centering
  \includegraphics[width=0.6\textwidth]{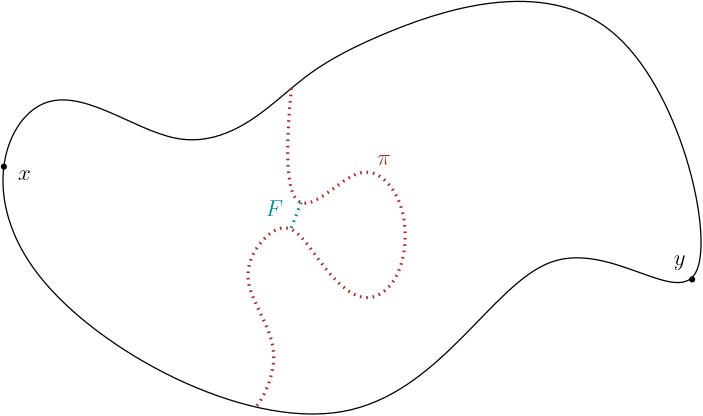}
  \caption{An illustration of a small-weight set $F$ that disconnects $V_x$.
  Assuming that that $\| F \|_\lambda < 1$, then $F$ must intersect $\omega$ in order for $\pi$ to be considered $\omega$-cohesive.}
  \label{fig:cutset}
\end{figure}

\begin{lemma}[$\omega$-cohesive cutsets exist]
\label{lem:existence}
Assume that $\inf_k \psi^*_\lambda(2^k) \ge 2$. Let $\omega\subset E$ be such that $x\nto y$ in $\omega$. Then there exists an $\omega$-cohesive cutset.
\end{lemma}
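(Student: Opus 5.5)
The idea is to pick a closed cutset of minimal weight and show that it is automatically $\omega$-cohesive. (I read the second condition of Definition~\ref{def:cohesive} as $\omega\cap F\neq\emptyset$.) If $\pi$ failed to be cohesive, a light uncrossed cut $F$ on one side would let us reroute the cutset around a piece of that side, and the isoperimetric assumption would make the rerouted cutset strictly lighter.

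\textbf{Existence of a minimiser.} Since $x\nto y$ in $\omega$, let $C_x$ be the $\omega$-cluster of $x$. The edge boundary $\partial C_x$ is $\omega$-closed and separates $x$ from $y$, so it contains an inclusion-minimal separating subset, which is a closed cutset. $V$ is finite, so there are finitely many cutsets. We may therefore choose a closed cutset $\pi$ (i.e.\ $\pi\cap\omega=\varnothing$) minimising $\|\pi\|_\lambda$.

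\textbf{Setting up the contradiction.} Suppose $\pi$ is not cohesive. By symmetry take $z=x$, so there is $F\subset E_x$ with $\|F\|_\lambda<1$, $F\cap\omega=\varnothing$, and $(V_x,E_x\setminus F)$ disconnected. Let $A$ be the component of $x$ in $(V_x,E_x\setminus F)$ and $B:=V_x\setminus A\neq\varnothing$. Let $D$ be the connected component of $y$ in the graph $(V,E)$ restricted to $V\setminus A$, and set $A':=V\setminus D\supseteq A$.

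\textbf{Building a lighter closed cutset.} The set $\pi':=\partial A'$ separates $x$ from $y$. Every edge of $\partial A'$ leaves $A$ and lies in $\pi\cup F$: edges from $A$ go either to $V_y$ (through $\pi$) or to $B$ (through $F$). Hence $\pi'$ is $\omega$-closed. We pass to an inclusion-minimal subset $\pi''\subset\pi'$, which is a closed cutset with $\|\pi''\|_\lambda\le\|\pi'\|_\lambda$. The edges of $\pi$ incident to $B$ all go from $B$ to $V_y$, and none of them lies in $\pi'$, because both endpoints are in $V\setminus A$ and edges of $\partial A'$ have an endpoint in $A$. Also $\pi'\cap F\subseteq F$. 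Writing $\pi_B$ for the edges of $\pi$ incident to $B$, this gives
\[
\|\pi''\|_\lambda\le \|\pi\|_\lambda-\|\pi_B\|_\lambda+\|F\|_\lambda .
\]
It therefore suffices to show $\|\pi_B\|_\lambda>\|F\|_\lambda$.

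\textbf{The isoperimetric step.} Since $\partial B\subseteq \pi_B\cup F$, we have $\|\pi_B\|_\lambda\ge \|\partial B\|_\lambda-\|F\|_\lambda$. Note that $B\subset V\setminus\{x,y\}$. If $|B|\le |V|/2$, choose $k$ with $2^k\le|B|\le 2^{k+1}$; then $\|\partial B\|_\lambda\ge\psi^*_\lambda(2^k)\ge 2$. This yields $\|\pi_B\|_\lambda\ge 2-\|F\|_\lambda>1>\|F\|_\lambda$, so $\|\pi''\|_\lambda<\|\pi\|_\lambda$, contradicting minimality.

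\textbf{Main obstacle.} The step I expect to be delicate is the case $|B|>|V|/2$. There $\psi^*$ does not directly apply, and the complement $V\setminus B$ contains $x$ and $y$. I would first try $\partial B=\partial(V\setminus B)$ together with the bound $\phi_\lambda(1)\ge\Psi>2$ recorded just before Definition~\ref{def:cutset}. If that is not permissible under the lemma's bare hypothesis, I would instead replace $B$ by a single suitably chosen component $B_0$ of $(B,E_x\setminus F)$. Such a $B_0$ has $\partial B_0\subseteq\pi\cup F$, avoids $x$ and $y$, and the rerouting argument can be run with $B_0$ in place of $B$. Either way the conclusion is that a non-cohesive minimal-weight closed cutset cannot exist, which proves the lemma.
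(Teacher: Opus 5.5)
Your argument is the paper's own: take a minimum-weight closed cutset, reroute it around the detached piece $B$ (your $\partial A'$ plays the role of the paper's $\pi\,\Delta\,\partial W_x$, and both lie in $(\pi\setminus\pi_B)\cup F$), and use $\partial B\subseteq\pi_B\cup F$ with $\|\partial B\|_\lambda\ge 2$ to get a strictly lighter closed cutset. The case $|B|\le|V|/2$ is complete.

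The obstacle you flag for $|B|>|V|/2$ is real, and the paper's proof does not address it either. It asserts $\|\partial W_x\|_\lambda\ge\inf_k\psi^*_\lambda(2^k)$ for every nonempty $W_x$ avoiding $x,y$, which ignores the cap $|S|\le|V|/2$ in \eqref{eq:def:psi:star}. Your fallback (b) cannot close this gap. If $(B,E_x\setminus F)$ is connected then $B_0=B$ and nothing changes. More fundamentally, under the bare hypothesis $\inf_k\psi^*_\lambda(2^k)\ge 2$ the lemma is false. Let $W$ be a complete graph on $N\ge 3$ vertices with all weights $10$, and let $V=W\cup\{x,y\}$. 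Add edges $\{x,w_1\}$ and $\{y,w_2\}$ ($w_1\neq w_2$ in $W$) of weight $\epsilon\in(0,1)$, and take $\omega=\varnothing$. Every $S\subset W$ with $1\le |S|\le (N+2)/2$ misses a vertex of $W$, so $\|\partial S\|_\lambda\ge 10$ and the hypothesis holds. However, every cutset is $\{\{x,w_1\}\}$, $\{\{y,w_2\}\}$, or a subset of $E(W)$ separating $w_1$ from $w_2$. For the first, $F=\{\{y,w_2\}\}$ is an $\omega$-closed cut of $V_y$ of weight $\epsilon<1$. For the other two, $F=\{\{x,w_1\}\}$ is such a cut of $V_x$. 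So no $\omega$-cohesive cutset exists. For $\pi=\{\{y,w_2\}\}$ one has $B=W$, which is connected with $|B|>|V|/2$, and your rerouting merely swaps the two light edges without lowering the weight.

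So the only way out is your option (a), which amounts to adding a hypothesis. The condition $\phi_\lambda(1)\ge 2$ suffices, and it holds wherever the lemma is used. For finite $V$ one has $\phi_\lambda(1)\ge\Psi>25^2$, and in the infinite case $\phi_{\lambda'}(1)\ge\phi_\lambda(1)$ for the wired graph. In fact $\phi_\lambda(1)\ge 2$ alone gives $\|\partial B\|_\lambda\ge 2$ for every nonempty proper $B\subset V$: apply it to whichever of $B$, $V\setminus B$ has size at most $|V|/2$. With this hypothesis your case split disappears and your proof is complete. You should state the hypothesis explicitly and drop (b), rather than presenting the two routes as interchangeable.
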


\begin{proof}
The proof roughly says that the minimum-weight cutset not meeting $\omega$ is $\omega$-cohesive, because any small cut in $V_x$ allows us to switch one of its sides to $V_y$, yielding a cutset with smaller weight.

Let $\pi$ be any minimum-weight closed cutset. More precisely, let $\pi$ be some cutset for which,
\begin{equation}
  \label{eq:min:cut}
  \|\pi\|_{\lambda}=\min\{\|\pi'\|_{\lambda}:\pi'\subset E\setminus\omega,\pi'\text{ cutset}\}.
\end{equation}
The set on the right is nonempty, as we can start with $E\setminus\omega$ and delete edges one at a time, keeping $x$ and $y$ disconnected. Hence, $\pi$ exists and $\pi\cap\omega=\varnothing$ by definition. We claim that $\pi$ is $\omega$-cohesive.

Assume by contradiction that there exists $F\subset E_x\setminus\omega$ with $G_x=(V_x,E_x\setminus F)$ disconnected and $\|F\|_{\lambda}<1$. Let $V'_x$ be the connected component of $x$ in $G_x$ and $W_x=V_x\setminus V'_x\neq\varnothing$. Let $\partial W_x=\{\{a,b\}\in E:a\in W_x,b\in V\setminus W_x\}$ be the boundary of $W_x$ in $G$. By construction $\partial W_x\subset \pi\cup F$. Consider the set $\pi'=\pi\Delta \partial W_x\subset\pi\cup F$, where $\Delta$ is the symmetric difference. Then $\pi'\cap\omega=\varnothing$ and $x\in W_x$ is not connected to $y \in V_{y}$ in $(V,E\setminus\pi')$. Hence, $\pi'$ contains a cutset and \eqref{eq:min:cut} gives $\|\pi'\|_{\lambda}\ge\|\pi\|_{\lambda}$. However,
\[\|\pi'\|_{\lambda}=\|\pi\|_{\lambda}+2\|\partial W_x\setminus\pi\|_{\lambda}-\|\partial W_x\|_{\lambda}\le\|\pi\|_{\lambda}+2\|F\|_{\lambda}-2<\|\pi\|_{\lambda},\]
since $\|\partial W_x\|\ge \inf_k \psi^*_\lambda(2^k)
\geq 2$, as $W_x$ is nonempty and does not contain $x$ or $y$.
This contradiction (and its analogue for $G_y$) concludes the proof.
\end{proof}

Similarly to \cite{Cut26a}, we will construct an algorithm which starts from $G$ and sequentially collapses edges according to some random input. This process will generate a sequence of multi-graphs, with the last one consisting of the two points $\{x, y\}$ joined by some edges. The edges present in the final multi-graph will form a (random) cutset between $x$ and $y$. In order to describe a given step of this procedure, we need to setup some notation for multi-graphs with (edge and vertex) weights.

Fix a connected multi-graph $H = (V_H, E_H)$ with $x,y\in V_H$, endowed with the following functions.
The first one is an edge-weight $\lambda^H : e \in E_H \mapsto \lambda^H_e \in [0,\infty)$ which will represent the percolation weights of each edge.
We recall that $H$ can be a multi-graph and, in this case, the function $\lambda^H$ can assign different weights to distinct edges connecting the same endpoints.
We systematically erase self-loops from multi-graphs. 
The second function we are given is a vertex-weight $|\cdot|_H : V_H \to \bbR$, which encodes the size of the cluster represented by this vertex.
Fix also $P^H : E_H \to (0, \infty) : e \mapsto P^H_e$ which, roughly speaking, associates to each edge a ``clock ring'', marking the moment at which this edge will collapse.

The next definition was one of the main inputs from \cite{Cut26a}.

\begin{definition}[\select]
\label{def:select}
Let $H=(V_H,E_H)$ be a multi-graph with a vertex-weight function $|\cdot|_H:V_H\to\bbR$. The procedure \select{} returns some $v_H\in\argmin_{V_H\setminus \{x,y\}}|\cdot|_H$, breaking ties in an arbitrary deterministic way.
\end{definition}

\begin{figure}
    \centering
    \begin{tikzpicture}[x=3.5cm,y=3.5cm]
    \draw[color=white] (0, 0) -- (0, -.2);
    \GraphInit[vstyle=Normal]
    \tikzset{VertexStyle/.append style = {minimum size = 24pt}}
\Vertex[x=0,y=0,L=$x$]{a}
\Vertex[x=1,y=0,L=$v$]{b}
\Vertex[x=1,y=1,L=$u$]{c}
\Vertex[x=0,y=1,NoLabel]{d}
\Vertex[x=2,y=0,NoLabel]{e}
\Vertex[x=2,y=1,L=$y$]{f}

\Edge[label={$\lambda_2,P_2$},style={bend left}](c)(b)
\Edge[label={$\lambda_3,P_3$}](a)(b)
\Edge[label={$\lambda_4,P_4$},style={bend left}](a)(d)
\Edge[label={$\lambda_5,P_5$},style={bend left}](a)(c)
\Edge[label={$\lambda_6,P_6$},style={bend right}](e)(f)
\Edge[label={$\lambda_7,P_7$}](b)(f)
\Edge[label={$\lambda_8,P_8$}](b)(e)
\Edge[label={$\lambda_9,P_9$}](c)(f)

\tikzset{EdgeStyle/.append style = {ultra  thick,color=red!70!black}}
\Edge[label={$\lambda_1,P_1$},style={bend right}](c)(b)
    \end{tikzpicture}

    \begin{tikzpicture}[x=3.5cm,y=3.5cm]
    \GraphInit[vstyle=Normal]
    
  \tikzset{VertexStyle/.append style = {minimum size = 24pt}}
\Vertex[x=0,y=0,L=$x$]{a}
\Vertex[x=1,y=0.5,L=$u$]{b}
\Vertex[x=0,y=1,NoLabel]{d}
\Vertex[x=2,y=0,NoLabel]{e}
\Vertex[x=2,y=1,L=$y$]{f}

\Edge[label={$\lambda_3,P_3-P_1$},style={bend right}](a)(b)
\Edge[label={$\lambda_4,P_4$},style={bend left}](a)(d)
\Edge[label={$\lambda_5,P_5$},style={bend left}](a)(b)
\Edge[label={$\lambda_6,P_6$},style={bend right}](e)(f)
\Edge[label={$\lambda_7,P_7-P_1$},style={bend right}](b)(f)
\Edge[label={$\lambda_9,P_9$},style={bend left}](b)(f)
\Edge[label={$\lambda_8,P_8-P_1$},style={bend right}](b)(e)
    \end{tikzpicture}
    \caption{Illustration of \step{} of Definition~\ref{def:step} applied to the multi-graph $H$ above (with indices $H$ suppressed) resulting in the multi-graph $H'$ below. The thick red edge joining $u$ and $v$ is contracted. The size of $u$ in $H'$ is the sum of those of $u$ and $v$ in $H$.}
    \label{fig:step}
\end{figure}

The next definition incorporates some new features with respect to \cite{Cut26a}.
It is illustrated in Figure~\ref{fig:step}.

\begin{definition}[\step]
\label{def:step}
The procedure \step{} takes as input $H, \lambda^H, |\cdot|_H, P^H$ and $v_H$ as above and returns a multi-graph $H'$ with its own $\lambda^{H'},|\cdot|_{H'},P^{H'}$ but possibly with $E_{H'} = \varnothing$, if $V_{H'}$ is a singleton.
\step{} is also allowed to return a \fail{} message, while still outputting $H',\lambda^{H'},|\cdot|_{H'},P^{H'}$. The procedure is defined as follows (see Figure~\ref{fig:step}).
\begin{itemize}
  \item Let $e_H=\{v_H,u_H\}\in E_H$ be such that $P^H_{e_H} = \min \{P^H_e; e\in E_H \text{ with } v_H \in e\}$. Below, $e_H$ will be a.s.\ uniquely-defined, so ties may be broken arbitrarily if needed at this point.
  \item If $\Lambda_H:=\sum_{e\in E_H; v_H\in e} \lambda^H_e <1$ and $P^H_{e_H}>1$, the algorithm will \fail, but still continue.
  \item Let $H'$ be the multi-graph obtained by identifying $v_H$ and $u_H$, keeping the label $u_H$ for the resulting vertex in $V_{H'}$.
  \item For $v\in V_H\setminus \{v_H,u_H\}$, we set $|v|_{H'}=|v|_H$, while we set $|u_H|_{H'}=|v_H|_H+|u_H|_H$.
  \item Let $P^{H'}_{e}=P^H_e$, if $v_H\not\in e\in E_{H}$, and $P^{H'}_{e'}=P^H_{e}-P^H_{e_H}$ for $e\in E_{H}$ with $v_H\in e$ and $u_H\notin e$, where $e'$ is the image of $e$ through the quotient.
\end{itemize}
\end{definition}

Intuitively, the last step in the above procedure corresponds to letting the time elapse for the other edges adjacent to $v_H$.

\begin{lemma}[Law of \step]
\label{lem:law}
  Assume that, in the input of \step, $(P^H_e)_{e\in E_H}$ are independent, exponential random variables with rates $(\lambda^H_e)_{e\in E_H}$.
  Then,
  \begin{gather}
    \label{eq:law:e_H_distr}
    \text{$\bbP(e_H=e)=\lambda_e^H/\Lambda_H$ for $e\in E_H$ with $v_H \in e$,}\\
    \label{eq:law:fail_bound}
    \text{$\bbP(\fail)\ge \max(0,1-\Lambda_H)$,}\\
    \label{eq:law:e_H_indep_fail}
    \text{$\fail$ is independent of $e_H$,}\\
    \label{eq:law:clock_exponential}
    \text{conditionally on $e_H$ and $\1_\fail$, $(P^{H'}_e)_{e\in E_{H'}}$ are also distributed as $\otimes_{e\in E_{H'}} \Exp(\lambda^{H'}_e)$,}\\
    \label{eq:law:clock_decrease}
    \text{$P^{H'}_{e'}\le P^H_e$, whenever $e'$ is the image of $e$ under the quotient mapping $H$ to $H'$.}
  \end{gather}
\end{lemma}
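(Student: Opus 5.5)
The whole lemma rests on the competing-exponentials structure at $v_H$. Write $N:=\{e\in E_H: v_H\in e\}$ for the edges incident to $v_H$, and let $M:=\min_{e\in N}P^H_e$, so that $e_H$ is the unique minimiser. The plan is to invoke the classical fact: for independent $P_e\sim\Exp(\lambda^H_e)$, $e\in N$,
\begin{itemize}
\item the argmin $e_H$ is a.s.\ unique;
\item $\bbP(e_H=e)=\lambda^H_e/\Lambda_H$, which is \eqref{eq:law:e_H_distr};
\item $M\sim\Exp(\Lambda_H)$ and $M$ is independent of $e_H$;
\item conditionally on $\{e_H=e_0, M=m\}$, the variables $(P_e-m)_{e\in N\setminus\{e_0\}}$ are independent $\Exp(\lambda^H_e)$.
\end{itemize}
I would verify these by computing the joint density
\[\bbP\bigl(e_H=e_0,\ M\in dm,\ (P_e-m)_{e\neq e_0}\in d(t_e)\bigr)=\lambda^H_{e_0}e^{-\lambda^H_{e_0}m}\prod_{e\neq e_0}\lambda^H_e e^{-\lambda^H_e(m+t_e)}\,dm\,dt,\]
which factorises as $\frac{\lambda^H_{e_0}}{\Lambda_H}\cdot \Lambda_H e^{-\Lambda_H m}\cdot\prod_{e\neq e_0}\lambda^H_e e^{-\lambda^H_e t_e}$. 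The edges not in $N$ are independent of all of this and keep their $\Exp(\lambda^H_e)$ law untouched.

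Since \fail{} is the event $\{\Lambda_H<1\}\cap\{M>1\}$ and $\Lambda_H$ is deterministic, it is a function of $M$ alone. Independence of $M$ and $e_H$ then gives \eqref{eq:law:e_H_indep_fail}. For \eqref{eq:law:fail_bound}, if $\Lambda_H<1$ we get $\bbP(\fail)=e^{-\Lambda_H}\ge 1-\Lambda_H$; otherwise the bound is trivially $\ge 0$.

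For \eqref{eq:law:clock_exponential}, the factorisation above shows that conditionally on $(e_H,M)$ — hence in particular on $(e_H,\1_\fail)$, since $\1_\fail$ is a function of $M$ — the following families are independent with exponential laws of their own rates:
\begin{itemize}
\item the shifted clocks $P_e-M$ on $N\setminus\{e_H\}$;
\item the untouched clocks on $E_H\setminus N$.
\end{itemize}
Under the quotient map, edges of $N$ joining $v_H$ and $u_H$ other than $e_H$ become self-loops and are erased. Every remaining edge of $H'$ is the image of exactly one edge of $E_H$, with the same weight, so $\lambda^{H'}_{e'}=\lambda^H_e$. Its new clock is either the shifted or the untouched one, matching Definition~\ref{def:step}. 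One has to be a little careful to condition on $M$ first and then integrate out, so as to pass from conditioning on $M$ to conditioning on $\1_\fail$.

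Finally, \eqref{eq:law:clock_decrease} is immediate: the new clocks are either $P^H_e$ itself or $P^H_e-P^H_{e_H}\le P^H_e$, since all clocks are positive. No step here is a real obstacle. The only delicate point is the bookkeeping of the a.s.\ uniqueness and the erased self-loops, which I would handle in one sentence.
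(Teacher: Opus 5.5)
Your proposal is correct and follows the same route as the paper: a.s.\ uniqueness of the minimiser, the competing-exponentials law of $e_H$, $\bbP(\fail)=e^{-\Lambda_H}\1_{\Lambda_H<1}$, memorylessness for the shifted clocks, and trivial monotonicity; your joint-density factorisation simply verifies the standard facts that the paper cites. Your observation that \fail{} is a function of the minimum clock $M$, which is independent of $e_H$, also makes explicit the justification for the independence of \fail{} and $e_H$, which the paper leaves implicit.
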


\begin{proof}
Firstly, $e_H$ is clearly a.s.\ uniquely defined, since the exponential distribution is diffuse.
Thus,
\[\bbP(\fail)=e^{-\Lambda_H} \cdot \1_{\Lambda_H<1}\ge \max(0,1-\Lambda_H).\]
The fact that $(P^{H'}_e)_{e\in E_{H'}}$ have the claimed conditional law follows from the memoryless property of exponential variables. The distribution of $e_H$ is a standard property of Poisson processes. Finally, the fact that $P^{H'}_{e}\le P^H_e$ is immediate from Definition~\ref{def:step}.
\end{proof}

\begin{definition}[\expl]
Let $(P_e)_{e\in E}$ be independent exponential random variables with rates $(\lambda_e)_{e\in E}$. \expl{} is the following exploration algorithm. Start with the multi-graph $H_0=G$ with constant weight $|\cdot|_{H}\equiv 1$. Let $v_{G}$ be given by \select{}. Apply \step{} to obtain a multi-graph $H_1=H'$. We repeat applying \select{} and \step{} until we reach a multi-graph with exactly two vertices (which are necessarily labeled $x$ and $y$). This defines a sequence of multi-graphs $(H_s)_{s=0}^{S}$.
The output of \expl{} is the set $\Pi\subset E$ of edges which are mapped to the edges remaining in the final multi-graph $H_S$. 
\end{definition}

Note that, if, for any nonempty $A\subsetneq V$, $\|\partial A\|_{\lambda}\ge 1$, then, a.s., \expl{} does not fail. Moreover, the output $\Pi$ is necessarily a cutset.

\begin{remark}[Alternative construction]
  \label{rem:two_constructions}
  An important consequence of Lemma~\ref{lem:law} is that it provides an alternative way to construct the sequence of multi-graphs $(H_t)_{t\geq0}$ from \expl{}, together with the sequence of fail events without the use of the exponential random variables $P_e$.
  More precisely, we can alternate between using \select{} to pick a vertex $v_{G_t}$ and then choose $e_{H_t}$ according to \eqref{eq:law:e_H_distr}.
  After the whole sequence $H_t$ is fixed, one can decide whether each individual step $t$ failed using independent coin tosses with parameters $e^{-\Lambda_{H_t}} \1_{\Lambda_{H_t}<1}$.
  Note however, that this construction is not sufficient for the proof of our main theorem because it does not provide the crucial monotonicity \eqref{eq:law:clock_decrease}.
\end{remark}

\begin{definition}[$\expl_\pi$]
Let $\pi$ be a cutset. Then $\expl_\pi$ is the same as \expl{} but run on with  $\tilde \lambda$ given by $\tilde \lambda_e=0$ (in which case $\tilde{P}_e=\infty$) for every $e\in\pi$ and $\tilde{\lambda}_e=\lambda_e$ otherwise. This generates a sequence of multi-graphs $(G_t)_{t=0}^T$, whose law is denoted by $\mu_\pi$. Note that in this case the output is always $\pi$. We say that $\expl_\pi$ succeeds if it does not $\fail$ at any step.
\end{definition}

\begin{lemma}[Cohesion implies success]
\label{lem:success}
Let $P_e$ be independent exponential random variables with rates $(\lambda_e)_{e\in E}$. Let $\omega=\{e\in E:P_e\le 1\}\subset E$ (which is distributed as $\bbP_\lambda$). If a cutset $\pi$ is $\omega$-cohesive, then $\expl_\pi$ succeeds a.s.
\end{lemma}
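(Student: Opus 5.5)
The plan is to argue step by step, along an arbitrary realization of $\expl_\pi$. We will show directly that whenever the two conditions defining \fail{} both hold at some step, the cohesion of $\pi$ yields a contradiction. The tools are the cluster structure of the multi-graphs $G_t$ and the clock monotonicity \eqref{eq:law:clock_decrease}.

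First, we set up the cluster bookkeeping. By induction on $t$, each vertex $v$ of $G_t$ represents a set $C_v\subset V$ of original vertices, namely those identified into $v$ so far. These sets form a partition of $V$, and each $C_v$ is connected through contracted edges. Since edges of $\pi$ have $\tilde\lambda_e=0$ and $\tilde P_e=\infty$, they are never selected as $e_{G_t}$, because every vertex other than $x,y$ has some non-$\pi$ incident edge as long as $\tilde P$ is finite somewhere. Hence no contracted edge lies in $\pi$, and every $C_v$ is contained in $V_x$ or in $V_y$. Moreover, the edges of $G_t$ incident to $v$ are exactly the images of the edges of $\partial C_v$, and none of them is a self-loop. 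Thus $\Lambda_{G_t}=\|\partial C_v\setminus\pi\|_\lambda$ for the selected vertex $v=v_{G_t}$. Iterating \eqref{eq:law:clock_decrease} also gives $P^{G_t}_{e'}\le P_e$ for every original edge $e$ whose image $e'$ survives in $G_t$.

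Now suppose that at step $t$ the selected vertex $v\notin\{x,y\}$ satisfies $\Lambda_{G_t}<1$. Let $C=C_v$, so $C\subset V_z$ for some $z\in\{x,y\}$, and $z\notin C$ because $v\ne x,y$. Put $F:=\partial C\setminus\pi$. Every edge of $\partial C$ with both endpoints in $V_z$ belongs to $E_z$, and edges leaving $V_z$ lie in $\pi$, so $F\subset E_z$. Removing $F$ from $(V_z,E_z)$ separates the nonempty set $C$ from $V_z\setminus C\ni z$, so $(V_z,E_z\setminus F)$ is disconnected, and $\|F\|_\lambda=\Lambda_{G_t}<1$. Cohesion of $\pi$ (the second condition in Definition~\ref{def:cohesive}, read as $\omega\cap F\neq\varnothing$) then provides some $f\in F$ with $P_f\le 1$. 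The image $f'$ of $f$ is an edge of $G_t$ incident to $v$, so
\[P^{G_t}_{e_{G_t}}\le P^{G_t}_{f'}\le P_f\le 1 .\]
Hence the second condition of \fail{} does not hold, and step $t$ does not fail. Since $t$ was arbitrary, $\expl_\pi$ succeeds (almost surely, the only exceptional events being ties of the clocks).

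The main point requiring care is the bookkeeping invariant: clusters never straddle $\pi$, the incident edges of $v$ in $G_t$ are exactly the images of $\partial C_v\setminus\pi$ (with $\pi$-edges carrying zero weight), and the clocks on these images have only decreased. Once this invariant is established, the rest is a direct application of cohesion.
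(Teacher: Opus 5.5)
Your proof is correct and follows the same route as the paper. Clock monotonicity \eqref{eq:law:clock_decrease} keeps the clock of every $\omega$-edge at most $1$, so a failure would make $\partial C\setminus\pi$ (with $C$ the selected cluster) a cut of weight $<1$ inside $V_z$ that avoids $\omega$, contradicting cohesion; your bookkeeping (clusters never straddle $\pi$, and $F=\partial C\setminus\pi\subset E_z$) just spells out what the paper's short proof leaves implicit.
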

\begin{proof}
Let $P_e^t$ denote the exponential variable of the edge $e\in E$ after $t$ \step{}s of $\expl_\pi$, identifying the edge with its images under contraction until it becomes a self-loop (and is removed). By Lemma~\ref{lem:law}, $t\mapsto P_e^t$ is non-increasing and its initial value is at most 1 for $e\in\omega$. Therefore, we may \fail{} only on steps when $v_H\not\in\bigcup_{e\in\omega}e$. But then a failure could only be explained by $\Lambda_H < 1$, which would yield a set of edges $F$ contradicting Definition~\ref{def:cohesive}.
\end{proof}

Recall that $(G_t)_{t=0}^T$ is the sequence of multi-graphs obtained by $\expl_\pi$, and
let $\partial_{G_t} v_{G_t}$ represent the set of edges $e\in E$ whose image in $G_t$ under the quotient contains the selected vertex $v_{G_t}$ (edges mapping to loops are not included).
We write
\begin{align}
\label{eq:def:rtbt}
  r_t &{}= \|\partial_{G_t}v_{G_t}\setminus\pi\|_\lambda,&
  b_t &{}= \|\partial_{G_t}v_{G_t}\|_\lambda,&
  a_t &{}= \|\partial_{G_t}v_{G_t} \cap \pi\|_\lambda,
\end{align}
so that $b_t = r_t + a_t$.

\begin{lemma}[Law of $\Pi$]
  \label{lem:radon}
  Given a cutset $\pi$, recalling that $\Pi$ denotes the cutset generated by $\expl$, we have
  \begin{equation}
    \bbP[\Pi=\pi]=\bbE_{\mu_\pi}\left[\prod_t \frac{r_t}{b_t}\right].
  \end{equation}
\end{lemma}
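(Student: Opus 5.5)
We will prove the identity by comparing, step by step, the law of the sequence of multi-graphs produced by \expl{} with the law $\mu_\pi$ of the sequence produced by $\expl_\pi$. By Remark~\ref{rem:two_constructions} (itself a consequence of \eqref{eq:law:e_H_distr} and \eqref{eq:law:clock_exponential} in Lemma~\ref{lem:law}), the sequence $(H_t)$ generated by \expl{} is a Markov chain: given $H_t$, the vertex $v_{H_t}$ is chosen deterministically by \select{}, and the contracted edge $e_{H_t}$ is chosen among the edges of $E$ whose image contains $v_{H_t}$, with probability proportional to $\lambda_e$. The same description applies to $\expl_\pi$ with $\tilde\lambda$ in place of $\lambda$, so edges of $\pi$ receive probability $0$ and every other edge $e\in\partial_{G_t}v_{G_t}\setminus\pi$ is chosen with probability $\lambda_e/r_t$. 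Here the quotient weights $\lambda^H$ are just the original weights carried along, so $\Lambda_{H_t}=b_t$ in \expl{}, while the corresponding total in $\expl_\pi$ is $r_t$.

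First we observe that $\{\Pi=\pi\}$ coincides with the event that no edge of $\pi$ is ever contracted along the run of \expl{}. Indeed, if an edge of $\pi$ were contracted, $x$ and $y$ would be joined in the quotient through $V_x$, $V_y$ and that edge, or at least the output would miss an edge of $\pi$. Conversely, if no edge of $\pi$ is ever contracted, then only edges of $E_x\cup E_y$ are contracted, the final two vertices are the images of $V_x$ and $V_y$, and the surviving edges are exactly $\pi$. The details here use $V_x\sqcup V_y=V$ and the fact that loops are erased.

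Next we write $\bbP[\Pi=\pi]$ as a sum over all admissible trajectories $(h_0,\dots,h_T)$ in which no edge of $\pi$ is contracted. The probability of such a trajectory under \expl{} is $\prod_t \lambda_{e_t}/b_t$, where $e_t$ is the contracted edge at step $t$. (If several parallel edges give the same quotient, we sum over them; this only groups terms and does not affect the ratio.) The probability of the same trajectory under $\mu_\pi$ is $\prod_t \lambda_{e_t}/r_t$, and the trajectories charged by $\mu_\pi$ are exactly these, since $\expl_\pi$ never contracts $\pi$. The two laws therefore have Radon--Nikodym density $\prod_t r_t/b_t$ on this set, and summing gives $\bbE_{\mu_\pi}[\prod_t r_t/b_t]$.

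The step needing most care is checking that both processes really are Markov chains with these explicit transition kernels, independently of the clock variables. This follows from \eqref{eq:law:clock_exponential}: conditionally on the past choices, the updated clocks are again independent exponentials with the original rates, so each contraction choice is a fresh competition of exponentials with probabilities \eqref{eq:law:e_H_distr}. The \fail{} flags play no role in the trajectory, by \eqref{eq:law:e_H_indep_fail}. We also need $r_t>0$ whenever $v_{G_t}$ is selected in $\expl_\pi$, which holds because $v_{G_t}$ corresponds to a nonempty set not containing $x$ or $y$ inside $V_x$ or $V_y$; such a set, being connected within its side, has a neighbour on that same side.
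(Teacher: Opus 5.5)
Your proposal is correct and is essentially the paper's proof: you compare, trajectory by trajectory, the probability $\prod_t\lambda_{e_t}/b_t$ under \expl{} with $\prod_t\lambda_{e_t}/r_t$ under $\mu_\pi$ and sum. You also make explicit two points the paper leaves implicit: that $\{\Pi=\pi\}$ is exactly the event that no edge of $\pi$ is ever contracted, and that $r_t>0$, so $\expl_\pi$ is well defined. One small correction to that identification: only your second reason is valid---a contracted edge of $\pi$ becomes an erased loop and so is missing from $\Pi$---since \step{} never merges the vertices labelled $x$ and $y$.
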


\begin{proof}
  Consider a deterministic sequence of edges $e_0, e_1, \ldots,e_{T-1}$ (necessarily not in $\pi$) that could be selected by $\expl_\pi$, resulting in the sequence of multi-graphs $g_0, g_1, \ldots, g_T$ (necessarily with output $\pi$).
  The probability that this specific sequence is selected under $\expl$ is
  \begin{equation}
    \prod_t \frac{\lambda_{e_t}}
    {b_t}
    = \prod_t \frac{r_t}    
    {b_t} 
    \prod_t
    \frac{\lambda_{e_t}}{r_t},
  \end{equation}
  the latter product being the probability of the trajectory under $\mu_\pi$. Summing over all sequences concludes the proof.
\end{proof}
By Lemma~\ref{lem:law} (see also Remark~\ref{rem:two_constructions}) for any cutset $\pi$, 
\begin{align}
  \label{eq:success}
  \nonumber
  \bbP( \expl_\pi\text{ succeeds})
  & \overset{\eqref{eq:law:fail_bound}, \eqref{eq:law:e_H_indep_fail}}\leq
  \bbE_{\mu_\pi} \bigg[ \prod_t\min(r_t,1) \bigg]
  = \bbE_{\mu_\pi} \bigg[ \prod_t \Big( \frac{r_t}{b_t} \Big)
  / \Big( \frac{\max(r_t,1)}{b_t} \Big) \bigg]\\
  & \le \frac{\bbE_{\mu_\pi} \Big[ \prod_{t}\frac{r_t}{b_t} \Big]}
  {\essinf_{\mu_\pi}\prod_t\frac{\max(r_t,1)}{b_t}}
  \overset{\text{Lemma}~\ref{lem:radon}}
  = \frac{\bbP(\Pi=\pi)}
  {\essinf_{\mu_\pi}\prod_t\frac{\max(r_t,1)}{b_t}}.
\end{align}
In order to conclude, we need the following lemma adapted from \cite{Cut26b}, whose proof is spelled out in Appendix~\ref{sec:app} for completeness.

\begin{lemma}[Isoperimetic lower bound]
\label{lem:gpt}
Recall the definition of $\Psi^*$ from \eqref{eq:def_Psi^star} and assume that $\phi_\lambda(1)\geq1$.
Then, for any cutset $\pi$, a.s.\ under $\mu_\pi$, we have
\begin{align*}
\prod_t\frac{\max(r_t,1)}{b_t}&{}\ge \exp\left(-\delta
\|\pi\|_{\lambda}\right),&\delta&{}:=\left(\frac{24}{\Psi^*}+\frac{22}{\sqrt{\Psi^*}}+\frac{4\log(e\phi_\lambda(1))}{\phi_\lambda(1)}\right)\end{align*}
\end{lemma}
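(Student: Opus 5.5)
We want a lower bound on $\sum_t \log(\max(r_t,1)/b_t)$. Steps with $a_t=0$ have $b_t=r_t$ and contribute $\log\max(r_t,1)/r_t\ge 0$, so only steps where the selected vertex touches $\pi$ matter. For such a step,
\[
\log\frac{\max(r_t,1)}{b_t}\ \ge\ -\log\Big(1+\frac{a_t}{\max(r_t,1)}\Big)\ \ge\ -\frac{a_t}{\max(r_t,1)} ,
\]
where the first inequality uses $b_t=r_t+a_t\le \max(r_t,1)+a_t$. The plan is to split these steps according to whether $b_t$ is large or small compared with a threshold $\sqrt{\Psi^*}$, or with $\phi_\lambda(1)$.

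\textbf{Structure of the selected vertex.} At step $t$, $v_{G_t}$ corresponds to a set $S_t\subset V\setminus\{x,y\}$ with $|S_t|=|v_{G_t}|$ and $b_t=\|\partial S_t\|_\lambda$. Since \select{} picks a minimal-size cluster and sizes only merge, the sizes $|S_t|$ are non-decreasing in $t$. Moreover $S_t$ is contained in $V_x$ or in $V_y$, so $|S_t|\le |V|/2$ on at least one side; the other side is handled symmetrically, with care needed near $|V|/2$. Group the steps into dyadic scales $k$, where $2^k\le |S_t|<2^{k+1}$. At scale $k$ we have $b_t\ge \psi^*_\lambda(2^k)$. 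The clusters selected at scale $k$ all have sizes in $[2^k,2^{k+1})$, and each such step merges the cluster into another one. Each vertex of $V$ therefore lies in at most about two selected clusters per scale, so the clusters selected at a given scale are essentially disjoint. The key accounting is that each edge of $\pi$ lies in $\partial S_t$ for at most $O(1)$ of the steps at a given scale. Hence
\[
\sum_{t\in\text{scale }k} a_t\ \le\ C\,\|\pi\|_\lambda .
\]

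\textbf{Bound per step.}
\begin{itemize}
\item If $r_t\ge b_t/2$, then $a_t/\max(r_t,1)\le 2a_t/b_t\le 2a_t/\psi^*_\lambda(2^k)$. Summing over $k$ with the disjointness bound gives at most $C\|\pi\|_\lambda/\Psi^*$, which is the $24/\Psi^*$ term.
\item If $r_t<b_t/2$, then $a_t\ge b_t/2$ and the ratio $\max(r_t,1)/b_t$ can be small. Charge the loss $\log(b_t/\max(r_t,1))\le \log(e\,b_t)$ against $a_t$. When $b_t\ge \phi_\lambda(1)$ we have $\log(e b_t)/b_t\le \log(e\phi_\lambda(1))/\phi_\lambda(1)$, because $u\mapsto\log(eu)/u$ is decreasing for $u\ge1$. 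This gives a total of at most
\[
2\,\frac{\log(e\phi_\lambda(1))}{\phi_\lambda(1)}\sum_t a_t ,
\]
and repeating over scales produces the third term of $\delta$.
\item To prevent the factor $\sum_t a_t$ from picking up a logarithmic number of scales, use the threshold $\sqrt{\Psi^*}$. Steps with $b_t\ge \sqrt{\Psi^*}\,\psi^*_\lambda(2^k)$-type largeness are treated as above. For the remaining steps, use that once a cluster has most of its boundary on $\pi$, its $\pi$-edges stay on $\pi$ and are counted only at scales $k$ with $\psi^*_\lambda(2^k)\lesssim a_t$. By Cauchy--Schwarz-type summation over $k$ weighted by $1/\psi^*_\lambda(2^k)$, this contributes the $22/\sqrt{\Psi^*}$ term.
\end{itemize}

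\textbf{Main obstacle.} The hardest part is the multiscale charging: showing that each edge of $\pi$ is charged at most a bounded number of times per scale, in a way that interacts correctly with the min-size selection rule. This amounts to adapting the argument of \cite{Cut26b} from cardinalities to weights.
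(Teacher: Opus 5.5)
\textbf{There is a genuine gap: the ``bad'' steps, where most of the boundary lies on $\pi$, are not controlled.}

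Your handling of the steps with $r_t\ge b_t/2$ is fine. Each endpoint of $\pi$ is selected at most once per dyadic scale, because the size of its cluster at least doubles between selections. So these steps cost $O(\|\pi\|_\lambda/\Psi^*)$.

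The bad steps are where the argument fails. Your second bullet bounds their cost by $2f(\phi_\lambda(1))\sum_t a_t$, with $f(u)=\log(eu)/u$. But $\sum_t a_t$ over these steps is only bounded by $\|\pi\|_\lambda$ times the number of scales at which an endpoint gets selected. So this does not produce the third term of $\delta$; in the paper, that term pays for a single step per endpoint of $\pi$, namely its last selection.

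Your third bullet does not repair this. The only information it uses is that the bad steps of a given endpoint occur at distinct scales $k$ with $\psi^*_\lambda(2^k)\le b_t\le 2a_t$. This gives at best a cost $\sum_k f(\psi^*_\lambda(2^k))$ per unit of $\pi$-weight, which is not $O((\Psi^*)^{-1/2})$. For instance, if $\psi^*_\lambda(2^k)=p$ on $n$ consecutive scales, then $(\Psi^*)^{-1}=n/p$, whereas $\sum_k f(\psi^*_\lambda(2^k))=(n/p)\log(ep)$. This is unbounded as $p\to\infty$ with $n/p$ fixed. The Cauchy--Schwarz step you allude to cannot help either: it would need $\sum_i p_if(b_{t_i})^2=O(1)$, which fails when $b_{t_i}\approx p_i\approx p$ for $n$ indices.

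\textbf{The missing idea is that a bad step forces the boundary to grow at the next selection.} Fix an endpoint $z$ of an edge of $\pi$, and let $t_0<\dots<t_l$ be the times at which the selected vertex contains $z$. The region absorbed between $t_i$ and $t_{i+1}$ has size $m_{t_{i+1}}-m_{t_i}$. Its boundary consists of:
\begin{itemize}
\item edges to the old cluster, of weight $\le r_{t_i}$;
\item non-$\pi$ boundary edges of the new cluster, of weight $\le r_{t_{i+1}}$;
\item new $\pi$-edges, of weight $a_{t_{i+1}}-a_{t_i}$.
\end{itemize}
Isoperimetry of that region gives $b_{t_{i+1}}-b_{t_i}\ge p_i-2r_{t_i}$. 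Here $p_i$ is the smaller of $\psi^*_\lambda$ at the dyadic scales of $m_{t_i}$ and of $m_{t_{i+1}}-m_{t_i}$, and $\sum_i 1/p_i\le 3/\Psi^*$.

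So the right dichotomy is $r_{t_i}$ versus $p_i/4$, not $r_t$ versus $b_t/2$:
\begin{itemize}
\item if $r_{t_i}\ge p_i/4$, the cost is at most $4a_{t_i}/p_i$;
\item if $r_{t_i}<p_i/4$, the cost is at most $\frac43a_{t_i}f(b_{t_i})$, but you gain $b_{t_{i+1}}\ge b_{t_i}+p_i/2$.
\end{itemize}
This growth is what makes the intervals $[b_{t_i},b_{t_i}+p_i/2)$ disjoint. Hence $\sum_ip_if(b_{t_i})^2\le\frac92\int_1^\infty f^2=\frac{45}{2}$, and Cauchy--Schwarz yields $\sum_if(b_{t_i})\le\frac{3\sqrt5}{\sqrt2}(\sum_i1/p_i)^{1/2}$. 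This is where $(\Psi^*)^{-1/2}$ comes from, not from a threshold at $\sqrt{\Psi^*}$.

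Only the last index $i=l$, which has no successor, is charged $2f(\phi_\lambda(1))$. Finally, the identity $a_t=\sum_{e\in\pi}\lambda_e\,\#\{z\in e: z\text{ is mapped to }v_{G_t}\}$ turns these per-endpoint bounds into $\delta\|\pi\|_\lambda$.
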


\begin{proof}[Proof of Theorem~\ref{th:main}]
First note that by the assumption \eqref{eq:assumption} we have $\min(\Psi^*,\phi_\lambda(1))\geq\Psi\geq 25^2$, which readily implies $\delta\leq\varepsilon=25/\sqrt{\Psi}<1$.
Combining \eqref{eq:success} and Lemma~\ref{lem:gpt}, we obtain
\begin{equation}
  e^{\varepsilon\|\pi\|_{\lambda}}\bbP(\Pi=\pi)
  \ge \frac{\bbP(\pi\cap\omega=\varnothing)\bbP(\expl_\pi \text{ succeeds})}
  {\bbP(\pi\cap\omega=\varnothing)}.
\end{equation}
This expression can be estimated using the independence between the events $\{\pi\cap\omega=\varnothing\}$ and $\{\expl_\pi\text{ succeeds}\}$ with Lemma~\ref{lem:success}, giving
\begin{equation}\label{eq:match_and_cohesive}
  e^{\varepsilon\|\pi\|_{\lambda}}\bbP(\Pi=\pi)
  \ge e^{\|\pi\|_{\lambda}}\bbP_{\lambda}(\pi\text{ is $\omega$-cohesive}).
\end{equation}
Using Lemma~\ref{lem:existence} and \eqref{eq:match_and_cohesive}, we obtain
\begin{align*}
  \bbP_{\lambda}(x\nto y) &= \bbP_\lambda(\text{there exist an } \omega\text{-cohesive cutset } \pi) 
  \le \sum_{\pi} \bbP_{\lambda}(\pi \text{ is } \omega\text{-cohesive}) \\ &\le \sum_{\pi} e^{-(1-\varepsilon)
  \|\pi\|_{\lambda}}\bbP(\Pi=\pi) \overset{\eqref{eq:chi}}{\le} e^{-(1-\varepsilon)\chi_\lambda(x,y)}.
\end{align*}
This completes the proof of Theorem~\ref{th:main} for finite graphs.

Finally, we observe that the case where $V$ is infinite follows readily from the case where $V$ is finite. Indeed, for any finite subset $V'\subset V$, we can collapse all the vertices in $V\setminus V'$ into a single ``ghost vertex'' $g$. This naturally defines an associated finite weighted (multi-)graph $(V'\cup\{g\},\lambda')$. By construction, one has  $\bbP_{\lambda'}(x\lr g) = \bbP_{\lambda}(x\lr V\setminus V')$ for any $x\in V'$. Furthermore, one can check that $\psi^*_{\lambda'}(n)\ge\psi_\lambda(n)$ for all $n\in\{1, \ldots , \lfloor \frac{|V'|+1}{2} \rfloor \}$ (note that this is where the relevance of \eqref{eq:def:psi:star} comes from) and $\phi_{\lambda'}(1)\ge\phi_\lambda(1)\ge (\sum_k1/\psi_\lambda(2^k))^{-1}$. Therefore, we can apply the finite graph version of Theorem~\ref{th:main} for $(V'\cup\{g\},\lambda')$ to deduce that 
\[\bbP_{\lambda'}(x\lr g)\geq 1-e^{-(1-\varepsilon)\chi_{\lambda'}(x,g)}\le 1-e^{-(1-\varepsilon)\chi_\lambda(x)}.\] This shows that $\bbP_{\lambda}(x\lr V\setminus V')\geq 1-e^{-(1-\varepsilon)\chi_{\lambda}(x)}$ for every finite $V'\subset V$ such that $x\in V'$. By taking an exhaustion of $V$, we obtain $\bbP_{\lambda}(x\lr \infty)\geq 1-e^{-(1-\varepsilon)\chi_\lambda(x)}$ as desired.
\end{proof}

\section{Proof of Theorem~\ref{th:truncation}}
\label{sec:truncation}

Theorem~\ref{th:truncation} will follow readily from Theorem~\ref{th:main} and the following lemma.

\begin{lemma}[Isoperimetric inequality]\label{prop:truncation}
If $\lambda$ satisfies the assumptions of Theorem~\ref{th:truncation}, then the exist non-negative constants $(C_M)_{M\geq1}$ such that $C_M\to\infty$ as $M\to\infty$ and $\psi_{\lambda^M}(n)\geq C_M \sqrt{n}$ for all $M,n\geq1$.
\end{lemma}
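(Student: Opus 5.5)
The plan is to bound the weighted boundary of a finite set $S\subset\bbZ^d$ direction by direction, using a Loomis--Whitney type projection inequality in two independent directions. The constant will come out as $C_M\asymp\sqrt{\Lambda_M\,\min(\lambda_u,\lambda_v)}$, where $\Lambda_M$ is the truncated total weight and $u,v\in X$ are linearly independent. Throughout, $S$ is a finite set with $|S|\ge n$.

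\emph{Step 1 (decomposition by directions).} For $w\in\bbZ^d\setminus\{0\}$ set $N_w(S):=|\{z\in S: z+w\notin S\}|$. Each boundary edge $\{z,z+w\}$ with $z\in S$ and $z+w\notin S$ is counted exactly once when we sum over ordered pairs $(z,w)$ with $z\in S$. Hence
\[
\|\partial S\|_{\lambda^M}=\sum_{0<\|w\|_1\le M}\lambda_w N_w(S).
\]
If one prefers to count unordered edges via $\pm w$, this identity holds up to a harmless factor $\tfrac12$.

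\emph{Step 2 (lines and projections).} For $w\neq0$, partition $\bbZ^d$ into the progressions $z+\bbZ w$. Since $S$ is finite, every such progression meeting $S$ has a last element of $S$ in direction $w$, and that element contributes to $N_w(S)$. Therefore $N_w(S)\ge L_w(S)$, the number of $w$-progressions meeting $S$. Now let $w,u$ be linearly independent. Two progressions $z+\bbZ w$ and $z'+\bbZ u$ meet in at most one point, so the map from $S$ to (its $w$-progression, its $u$-progression) is injective. This gives
\[
|S|\le L_w(S)L_u(S)\le N_w(S)N_u(S).
\]

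\emph{Step 3 (optimisation).} Let $w^*$ minimise $N_w(S)$ over $w\in X$ with $\|w\|_1\le M$, let $a:=N_{w^*}(S)$, and let $\ell:=\bbR w^*$. Every $w$ with $\lambda^M_w>0$ satisfies $N_w(S)\ge a$. By Step 2, every such $w\notin\ell$ also satisfies $N_w(S)\ge |S|/a$. Since $X$ is not contained in a line, fix linearly independent $u,v\in X$. At least one of them lies off $\ell$, so for $M\ge \max(\|u\|_1,\|v\|_1)$ the total $\lambda$-weight of $\{w\in X\cap B_M:w\notin\ell\}$ is at least $m:=\min(\lambda_u,\lambda_v)>0$. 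Splitting the sum of Step 1 in two (each half bounded below) gives
\[
\|\partial S\|_{\lambda^M}\ge \tfrac12\Big(\Lambda_M\, a+ m\,\frac{|S|}{a}\Big)\ge \sqrt{\Lambda_M m\,|S|}\ge \sqrt{\Lambda_M m}\,\sqrt n,
\]
where $\Lambda_M:=\sum_{\|w\|_1\le M}\lambda_w$ and the middle inequality is AM--GM. So we may take $C_M:=\sqrt{m\Lambda_M}$ for $M\ge\max(\|u\|_1,\|v\|_1)$, and $C_M:=0$ for smaller $M$ (nonnegativity is all the statement requires). The non-summability assumption gives $\Lambda_M\to\infty$, hence $C_M\to\infty$. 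The constraint $|S|\le\min(2n,|V|/2)$ in \eqref{eq:psi} plays no role here.

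\emph{Main obstacle.} The delicate point is that weight concentrated on a single line $\ell$ yields no isoperimetry by itself; for example, a long segment along $\ell$ has small boundary in the direction of $\ell$. The naive bound ``$N_w+N_u\ge2\sqrt{|S|}$ for independent directions'' fails to exploit the diverging mass $\Lambda_M$ when that mass sits on one line. The remedy is Step 3: use the minimiser $a$ for all directions, which captures all of $\Lambda_M$, and the off-line directions only through the fixed mass $m$. Multiplying the two via AM--GM then lets the divergence of $\Lambda_M$ transfer to $C_M$ regardless of how the weight is distributed among lines.
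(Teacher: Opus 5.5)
Your proof is correct, and it takes a different route from the paper. The paper splits into two cases. If some line $\bbR x$ carries infinite weight, it discards all but one off-line direction $y$ and applies the projection bound $|A|\le|P_1(A)|\,|P_2(A)|$, obtaining $C_M=4\sqrt{\Lambda_M\lambda_y}$ with $\Lambda_M$ the mass on that line. If every line carries finite weight, it greedily decomposes $\lambda$ into pieces supported on pairs of independent directions with equal weights, applies the two-dimensional bound to each piece, and sums.

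You use the same projection inequality (your Step~2), but your $S$-dependent minimising direction $w^*$ changes the bookkeeping. You charge the whole mass $\Lambda_M$ against $a=N_{w^*}(S)$ and only a fixed off-line mass $m$ against $|S|/a$, so a single AM--GM handles both regimes at once. This removes the case distinction and the greedy decomposition. It also removes the decomposition's bookkeeping: checking that the pieces exhaust $\lambda$ and that they are compatible with truncation at radius $M$.

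What you give up is quantitative. When every line has summable weight, the paper's constant is of order $\Lambda_M$, whereas yours is only $\sqrt{m\Lambda_M}$. Since Theorem~\ref{th:truncation} only needs $C_M\to\infty$, this loss does not matter here.
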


We start by assuming the validity of the above lemma.

\begin{proof}[Proof of Theorem~\ref{th:truncation}]
By Lemma~\ref{prop:truncation}, we can find $M$ such that $\psi_{\lambda^M}(n)\geq 2500 n^{1/2}$. For such $M$ we have $\sum_{k\geq0} \frac{1}{\psi_{\lambda^M}(2^k)}\leq \frac{1}{2500}\sum_{k\geq0} 2^{-k/2} = \frac{1}{2500}(\frac{\sqrt{2}}{\sqrt{2}-1}) < \frac1{625}$ and the result follows from Theorem~\ref{th:main}.
\end{proof}

Finally, we provide the missing ingredient that was postponed.

\begin{proof}[Proof of Lemma~\ref{prop:truncation}]
We distinguish two cases.

\noindent\textbf{Case 1.} Assume that there exists $x\in\bbZ^d$ such that $\sum_{z\in\bbZ^d\cap(\bbR x)}\lambda_{z}=\infty$. Intuitively speaking, this corresponds to the case that, even though $\lambda$ is not supported on a line, there exists a line over which $\lambda$ is not summable. Then we may restrict attention to the case $d=2$ by setting $\lambda_y=0$ for all $y\in\bbZ^d\setminus(\bbR x)$ except one $y$ such that $\lambda_y>0$. Up to applying a linear transformation, we may assume $x$ and $y$ to be the first two unit vectors of $\bbZ^2$. 

We next use a folklore proof to establish an isoperimetric inequality. Let $A\subset\bbZ^2$ and observe that
\begin{equation}
\label{eq:projections}
|A|\le |P_1(A)|\cdot|P_2(A)|,
\end{equation}
where $P_1(A)=\{a\in\bbZ:A\cap(\{a\}\times\bbZ)\neq\varnothing\}$ and $P_2(A)=\{b\in\bbZ:A\cap(\bbZ\times\{b\})\neq\varnothing\}$. Fix an integer $M\ge 1$ and set $\Lambda_M:=\sum_{z=1}^M\lambda_{(z,0)}$ and $C_M= 4 \sqrt{\Lambda_M \lambda_y}$. Looking at the right-most, left-most, top-most and bottom-most sites of $A$ on each horizontal or vertical line, we have 
\[\|\partial A\|_{\lambda^M}\ge 2|P_1(A)|\lambda_y+2|P_2(A)|\Lambda_M\ge C_M\sqrt{|A|},\]
the last inequality following from \eqref{eq:projections} and $(a + b)/2 \geq \sqrt{ab}$.

\noindent\textbf{Case 2.} Assume that for every $x\in\bbZ^d$, we have $\sum_{y\in\bbZ^d\cap(\bbR x)}\lambda_y<\infty$.
We start by noticing that $\| \partial A \|_\lambda$ is linear in $\lambda$.
Therefore, it is enough to find a decomposition $\lambda = \sum_{i \geq 1} \lambda^i$, with each $\lambda^i$ having finite support and such that, for every finite set $A \subset \bbZ^d$, we have $\| \partial A \|_{\lambda^i} \geq c_i \sqrt{|A|}$ with $C_M := \sum_{i \leq M} c_i \to \infty$ as $M \to \infty$.

The decomposition that we construct will be such that each $\lambda^i$ satisfies
\begin{gather}
  \text{$\lambda^i_z > 0$ if and only if $z \in \{x_i, y_i, -x_i, -y_i\}$,}\\
  \text{$x_i$ and $y_i$ are linearly independent and}\\
  \text{$\lambda^i_{x_i} = \lambda^i_{y_i}$.}
\end{gather}

We start by defining $\lambda^1$ and for this, pick some $x_1 \in \bbZ^d$ attaining the largest value of $\lambda_x$.
Recalling $\sum_{y \in \bbZ^d\setminus(\bbR x)} \lambda_y = \infty$, take $y_1$ attaining the largest $\lambda_y$ outside of $(\bbR x)$.
Finally, set $\lambda^{1}_{x_{1}} = \lambda^{1}_{y_{1}} = \lambda_{y_{1}}$ and zero otherwise.
At this point, we just repeat the above procedure with $\lambda$ replaced by the remainder $\lambda - \lambda^i$, inductively.
This defines all the $\lambda^i$'s and the fact that $\lambda = \sum_i \lambda^i$ follows from from the construction, together with the hypothesis of Case~2.
We are now left with proving that for every finite set $A \subset \bbZ^d$, we have $\| \partial A \|_{\lambda^i} \geq c_i \sqrt{|A|}$ with non-summable $c_i$'s.

Notice that, for each $i \geq 1$, the proof of the previous case gives that the boundary of $A$ corresponding to translates of $\{x_i, y_i\}$ alone satisfies $\|\partial A\|^{x,y}_{\lambda^i} \ge 4\sqrt{\smash{\lambda^i_{x_i} \lambda^i_{y_i}} |A|} = \lambda^i_{y_i} 4\sqrt{|A|}$, for any $A$ contained in the sub-lattice $\langle x_i, y_i \rangle_\bbZ$, so in fact the same holds for any $A \subset \bbZ^d$, since $\sqrt a + \sqrt b \ge \sqrt{a + b}$ for $a, b\ge 0$.
The fact that the values $\lambda^i_{y_i}$ are not summable follows from $\sum_i\lambda^i_{y_i}=\frac12 \sum_x \lambda_x = \infty$.
\end{proof}

\section{Proof of Theorem~\ref{th:transitive}}
\label{sec:transitive}

Theorem~\ref{th:transitive} will follow easily from Theorem~\ref{th:main} and the following lemma. 

\begin{lemma}[Isoperimetric inequality]\label{lem:transitive} 
There exists a universal constant $c_1>0$ such that the following holds. If $G=(V,E)$ is a transitive graph of degree $\Delta$ and superlinear growth, then 
$\psi_{\1_E}(n)\geq c_1\sqrt{\Delta n}$ for all $n\geq1$.
\end{lemma}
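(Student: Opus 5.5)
We need to prove that $\psi_{\1_E}(n)\ge c_1\sqrt{\Delta n}$ for transitive graphs of superlinear growth. The plan is to split according to whether $n$ is small or large compared with $\Delta$, handling small sets by a degree-counting argument and large sets by the quantitative isoperimetry for transitive graphs from \cite{TT24,TT20}. Throughout we use $\|\partial S\|_{\1_E}=|\partial S|$, and we only need lower bounds for finite $S$ with $n\le|S|\le 2n$.

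\textbf{Small sets.} Suppose $|S|\le \Delta/2$. Each vertex of $S$ has $\Delta$ incident edges, and at most $|S|-1$ of them go to other vertices of $S$ (counting multiplicity appropriately for simple graphs). Hence $|\partial S|\ge |S|(\Delta-|S|+1)\ge |S|\Delta/2\ge \tfrac12\sqrt{\Delta|S|}\sqrt{\Delta|S|}$. Since $|S|\ge 1$, this gives $|\partial S|\ge \tfrac12\sqrt{\Delta n}\cdot\sqrt{\Delta}\ge \tfrac12\sqrt{\Delta n}$. More simply, $|S|\Delta/2\ge \sqrt{\Delta n}/2$ whenever $|S|\ge n$ and $|S|\Delta\ge\sqrt{\Delta n}$, which holds in this regime. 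So for $n\le \Delta/4$ (so that $2n\le\Delta/2$) the bound holds with $c_1=1/2$.

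\textbf{Large sets.} Here the key input is a uniform isoperimetric inequality for transitive graphs of superlinear growth, as in Tessera--Tointon \cite{TT24,TT20}. In the form I would invoke it: there is a universal $c>0$ such that, for any transitive $G$ of superlinear growth, every finite $S$ satisfies $|\partial S|\ge c|S|/r$, where $r$ is such that $|B_r|\gtrsim|S|$. Combined with the universal growth lower bound $|B_r|\ge c\,r^2|B_1|$ (a quantitative "superlinear implies at least quadratic" statement, uniform at all scales, also from \cite{TT20}), this gives $r\lesssim\sqrt{|S|/\Delta}$. Hence
\[
|\partial S|\ge c\,|S|\sqrt{\Delta/|S|}=c\sqrt{\Delta|S|}.
\]
An alternative route is to pass to the Cayley-type graph whose edges are given by the ball of radius $r_0\approx1$, or to quotient by a finite normal subgroup. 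A cleaner approach is to prove a Coulhon--Saloff-Coste inequality $|\partial S|\ge c|S|/\rho(2|S|)$, with $\rho(m)=\min\{r:|B_r|\ge m\}$. This inequality holds for transitive graphs via the standard argument, where edges are counted along geodesics of length $\le r$ and each edge lies in at most $\Delta\cdot$(ball) relevant pairs. The degree bookkeeping in that argument must be done carefully so that a factor of $\Delta$ is gained, not lost: the inequality reads $|\partial S|\ge c\,\Delta|S|/(\ldots)$ when normalized per edge.

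\textbf{Main obstacle.} The hard step is the uniform quadratic growth $|B_r|\ge c\,\Delta r^2$ for all $r$ with $|B_r|\ge\Delta$, with a constant independent of $G$. The natural first attempt is the Tessera--Tointon structure theorem: if growth at some scale is below $c\,\Delta r^2$, then $G$ is, at that scale, close to a virtually-$\bbZ$ group, and growth then stays linear forever, contradicting superlinearity. The intermediate scales where $|B_r|$ is comparable to $\Delta$ (that is, $r$ of order $1$ in the normalized sense) would be handled by the small-set argument above, with the crossover scale adjusted accordingly. Matching the two regimes gives a universal $c_1$.
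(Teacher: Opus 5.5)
Your proposal is correct and follows essentially the paper's route, which also combines a uniform growth bound with a growth-to-isoperimetry conversion. The growth bound $|B_r|\ge\varepsilon_1\Delta r^2$ for all $r\ge1$ is \cite[Corollary 7.1]{TT24}, the Tessera--Tointon persistence-of-growth result, not \cite{TT20}; it is simply cited, not re-derived from a structure theorem. The conversion is \cite[Proposition 5.1]{TT20}, as in your large-set step. Your separate degree-counting case $n\le\Delta/4$ is harmless. It handles the sets with $|S|\ll\Delta$, where the plain Coulhon--Saloff-Coste bound $|\partial S|\ge|S|/(2\rho(2|S|))$ only gives order $|S|\ll\sqrt{\Delta|S|}$.

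One correction: no factor of $\Delta$ can be gained in the Coulhon--Saloff-Coste bookkeeping, and none is needed. A bound like $c\,\Delta|S|/\rho(2|S|)$ fails on the paper's example $\bbZ^2\times(\bbZ/k\bbZ)$ with $S=[-m,m]^2\times(\bbZ/k\bbZ)$. The entire $\sqrt{\Delta}$ comes from the growth bound, as your displayed computation already shows.
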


\begin{proof}
By \cite[Corollary 7.1]{TT24} for $d=1$, there exists a universal constant $\varepsilon_1>0$ such that every transitive graph $G$ of degree $\Delta$ and superlinear growth satisfies $|B_n|\geq \varepsilon_1 \Delta n^2$ for all $n\geq1$. By \cite[Proposition 5.1]{TT20}, this implies the isoperimetric inequality $|\partial S|\geq \frac{\sqrt{2\varepsilon_1\Delta}}{12} |S|^{1/2}$. Setting $c_1=\sqrt{\varepsilon_1/72}$ concludes the proof.
\end{proof}

\begin{proof}[Proof of Theorem~\ref{th:transitive}] 
First, we observe that for small $n$ we can easily obtain a better bound on $\psi_{\1_E}$. If $S\subset V$ is such that $|S|\leq \Delta/2$, then each vertex $x\in S$ has at least $\Delta/2$ neighbours outside $S$ and therefore $\|\partial S\|_{\1_E}=|\partial S| \geq \frac{\Delta}{2}|S|$. In particular, $\psi_{\1_E}(n)\geq \frac{\Delta}{2}n$ for all $n\leq \Delta/4$ (this is where we use the fact that we work with $\psi$ instead of $\phi$). 
Let $\lambda=\frac{C}{\Delta} \1_E$ for some large constant $C$ to be chosen later. Obviously $\psi_\lambda=\frac{C}{\Delta}\psi_{\1_E}$, so we have $\psi_{\lambda}(n)\geq \frac{C}{2}n$ for $n\leq \Delta/4$, and $\psi_{\lambda}(n)\geq c_1C\sqrt{n/\Delta}$ for $n\geq \Delta/4$, by Lemma~\ref{lem:transitive}. 
Therefore
\begin{align*}
  \sum_{k\geq0} \frac{1}{\psi_{\lambda}(2^k)} \leq \sum_{k=0}^{ \lfloor \log_2 (\Delta/4) \rfloor} \frac{2}{C2^k} + \frac{\sqrt{\Delta}}{c_1C}\sum_{k\geq\lfloor\log_2 (\Delta/4)\rfloor+1} \frac{1}{2^{k/2}} \leq \frac{4}{C} + \frac{2}{c_1C} \left( \frac{\sqrt 2}{\sqrt 2 -1} \right) < \frac{4(1+2c_1^{-1})}{C}.
\end{align*}
By setting $C:=2500 (1+2c_1^{-1})$, Theorem~\ref{th:main} gives $\bbP_{\lambda}[0\lr \infty]>0$. Recalling the parametrization $p_e=1-e^{-\lambda_e}$, this readily implies that $p_{\mathrm{c}}^{\mathrm{bond}}(G)\leq 1-e^{-C/\Delta}$.
\end{proof}

\begin{remark}[Optimality]
 The bound $\phi(n)\geq c\sqrt{\Delta n}$ from Lemma~\ref{lem:transitive} is optimal up to constant. Indeed, consider the transitive (in fact, also Cayley) graph $G$ with vertex set $V= \bbZ^2 \times (\bbZ/k\bbZ)$ and edge set $E=\{\{(x,j),(y,k)\}: \text{either } x=y \text{ or } j=k \text{ and } y=x\pm e_i, i\in\{1,2\}\}$, where $e_1=(1,0)$ and $e_2=(0,1)$. In this case, one has $\Delta= k+3$ and the sets $S_n=[-\sqrt{n/k},\sqrt{n/k}]^2\times (\bbZ/k\bbZ)$ satisfy $|S_n|\asymp n$ and $|\partial S_n|\asymp \sqrt{kn}$. On the other hand, for the specific case of Cayley graphs of $\bbZ^2$, a better bound of the form $\phi(n)\geq c'\Delta\sqrt{ n}$ can be proved via elementary arguments similar to those used in the proof of Lemma~\ref{prop:truncation}.
\end{remark}

\section*{Acknowledgements} 
This project started during the XXIX Brazilian School of Probability in Rio de Janeiro, where Bernardo de Lima brought the truncation problem to our attention and later also shared helpful bibliographic pointers. We would like to thank him as well as the organizers of the event. We also thank Lyuben Lichev and Matthew Tointon for helpful reference pointers. This work was carried out in part during a visit by FS to IMPA, whose hospitality he gratefully acknowledges. 
AT was supported by the grants ``Projeto Universal'' (406250/2016-2) from CNPq, ``Produtividade em Pesquisa'' (304437/2018-2) from CNPq and ``Cientista do Nosso Estado'' (204.377/2024) from FAPERJ.
For the purpose of Open Access, a CC-BY public copyright licence has been applied by the authors to the present document and will be applied to all subsequent versions up to the Author Accepted Manuscript arising from this submission.

\paragraph{Declaration on the use of AI.} No artificial intelligence tools were used in the development of the mathematical ideas, arguments, or proofs presented in this paper, nor were they used for drafting or writing the manuscript.

\appendix
\section{Proof of Lemma~\ref{lem:gpt}}
\label{sec:app}
This appendix entirely follows \cite{Cut26b} up to minor adjustments and is presented only to keep the proof self-contained. We introduce the decreasing function 
\[f(u)=\log(eu)/u, ~~~ u\ge 1.\]
\begin{claim}
\label{claim:computation}
    Let $l$ be a positive integer and $(p_i)_{i=0}^l$ and $(b_i)_{i=0}^l$ be sequences of positive real numbers. Assume that $b_i\ge \max(1,p_i)$ for all $i$ and $b_{i}\ge b_{i-1}+p_{i-1}/2$ for $i\in\{1,\dots,l\}$. Then
    \[\sum_{i=0}^lf(b_i) \le\frac{3\sqrt5}{\sqrt2}\left(\sum_{i=0}^l\frac{1}{p_i}\right)^{1/2}.\]
\end{claim}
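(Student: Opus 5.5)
The plan is to apply Cauchy--Schwarz with weights $p_i$, which reduces the claim to a bound on $\sum_i p_i f(b_i)^2$. That sum is then controlled by comparing each term to an integral of $f^2$ over disjoint intervals. Concretely, we write
\[\sum_{i=0}^l f(b_i)=\sum_{i=0}^l \frac{1}{\sqrt{p_i}}\cdot\sqrt{p_i}\,f(b_i)\le\Big(\sum_{i=0}^l\frac1{p_i}\Big)^{1/2}\Big(\sum_{i=0}^l p_i f(b_i)^2\Big)^{1/2},\]
so it suffices to show $\sum_i p_i f(b_i)^2\le 45/2$. The target constant $3\sqrt5/\sqrt2=\sqrt{45/2}$ suggests $45/2=\tfrac92\cdot 5$, with $5=\int_1^\infty f(u)^2\,du$.

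\textbf{Step 1: compare each term to an integral.} Set $I_i:=[b_i,b_i+p_i/2]$. The hypothesis $b_{i+1}\ge b_i+p_{i-1+1}/2$, i.e.\ $b_{i+1}\ge b_i+p_i/2$, makes $I_0,\dots,I_l$ pairwise disjoint, since $I_i\subset[b_i,b_{i+1}]$ for $i<l$ and $I_l$ lies to the right of $b_l$. All of them lie in $[1,\infty)$ because $b_0\ge1$. Since $f$ is decreasing and $p_i\le b_i$, on $I_i$ we have $f(t)\ge f(b_i+p_i/2)\ge f(3b_i/2)$. We then use the elementary inequality
\[f(3u/2)=\frac{\log(3eu/2)}{3u/2}\ge \frac23\,\frac{\log(eu)}{u}=\tfrac23 f(u),\qquad u\ge1.\]
This gives
\[\int_{I_i} f^2\ge \frac{p_i}{2}\cdot\frac49 f(b_i)^2,\qquad\text{that is,}\qquad p_i f(b_i)^2\le \frac92\int_{I_i}f^2.\]

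\textbf{Step 2: sum and compute.} By disjointness,
\[\sum_i p_if(b_i)^2\le\frac92\int_1^\infty f(u)^2\,du.\]
Substituting $u=e^s$,
\[\int_1^\infty \frac{(1+\log u)^2}{u^2}\,du=\int_0^\infty(1+s)^2e^{-s}\,ds=1+2+2=5.\]
Hence $\sum_i p_if(b_i)^2\le 45/2$, and taking square roots yields the claim.

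\textbf{Main obstacle.} The only delicate point is the direction of the Riemann-sum comparison. The naive bound $p_i f(b_i)^2\le 2(b_{i+1}-b_i)f(b_i)^2$ is an upper Riemann sum for a decreasing function, so it cannot be bounded by the integral. The fix is to shift the evaluation point right, to $b_i+p_i/2\le 3b_i/2$. This uses $p_i\le b_i$, and the scaling estimate $f(3u/2)\ge\frac23 f(u)$ controls the loss. This is exactly where the constant $9/2$ comes from, and also why the last index $i=l$ needs no separate treatment.
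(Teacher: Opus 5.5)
Your proof is correct and follows the paper's argument essentially verbatim. Both use the bound $p_i f(b_i)^2\le\frac92\int_{b_i}^{b_i+p_i/2}f^2$ (via $f\ge f(3b_i/2)\ge\frac23 f(b_i)$ on that interval), the disjointness of these intervals coming from $b_{i}\ge b_{i-1}+p_{i-1}/2$, the value $\int_1^\infty f^2=5$, and Cauchy--Schwarz with $\sqrt{p_i}\,f(b_i)$ and $1/\sqrt{p_i}$. (Your closed intervals $I_i$ may share endpoints, so they are disjoint only up to a null set, which is harmless here.)
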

\begin{proof}
For any $u\in J_i:=[b_i,b_i+p_i/2)$, we have $f(u)\ge f(3b_i/2)\ge\frac23f(b_i)$. Therefore,
\[p_if(b_i)^2\le \frac92\int_{J_i}f^2.\]
Since the intervals $J_i$ are disjoint, this yields 
\[\sum_{i=0}^lp_if(b_i)^2\le \frac92\int_1^\infty f^2=\frac{45}2.\]
The proof then follows by the Cauchy--Schwarz inequality $(\sum x_iy_i)^2\le(\sum x_i^2) (\sum y_i^2)$ with $x_i=\sqrt{p_i}f(b_i)$ and $y_i=1/\sqrt {p_i}$.
\end{proof}

\begin{proof}[Proof of Lemma~\ref{lem:gpt}]
Recall $a_t$, $b_t$ $r_t$ from \eqref{eq:def:rtbt} and note that we always have $b_t\ge\phi_\lambda(1) \ge 1$. Let $m_t=|v_{G_t}|_{G_t}$ be the size of the vertex selected at step $t$ in $\expl_\pi$. 

Let us fix $z\in V$ such that there exists $e\in\pi$ for which $z\in e$. Let $(t_i)_{i=0}^l$ be the times such that $v_{G_{t_i}}=z$.\footnote{We abused the notation by writing $v_{G_{t_i}}=z$ to represent that $z$ is mapped to $v_{G_{t_i}}$ via the natural quotient.} 
For all $i\in\{0,\dots,l-1\}$, let \[p_i=\min\left(\psi_\lambda^*(2^{\lfloor\log_2m_{t_{i}}\rfloor}),\psi_\lambda^*(2^{\lfloor\log_2 (m_{t_{i+1}}-m_{t_i})\rfloor})\right).\]
Recalling \eqref{eq:def:psi:star}, we have $b_{t_i}\ge p_i$. Moreover, by Definition~\ref{def:select}, $m_{t_{i}}\ge 2m_{t_{i-1}} \ge 2^i$ for all $i\in\{1,\dots,l\}$, so the integers $\lfloor\log_2 m_{t_i}\rfloor$ are all distinct. Consequently, $\lfloor \log_2 (m_{t_{i+1}}-m_{t_i})\rfloor$ can take the same integer value for at most two indices $i$, since $\lfloor\log_2 m_{t_i}\rfloor\leq \lfloor\log_2 (m_{t_{i+1}}-m_{t_i})\rfloor \leq \lfloor\log_2 m_{t_{i+1}}\rfloor$.  Hence, 
\begin{equation}
\label{eq:sum:inverse}\sum_{i=0}^{l-1}\frac1{p_i}\le 3\sum_{k\ge0}
\frac1{\psi^*_\lambda(2^k)}=\frac{3}{\Psi^*}
.
\end{equation}

\begin{figure}
    \centering
    \begin{tikzpicture}[x=5cm,y=2cm]
    \draw (0,0) to [closed, curve through={(0.5,-0.205)..(1,0)..(1.205,0.5)..(1,1)..(0.5,1.205)..(-0.3,1)}] (-0.205,0.5);
    \fill (-0.205,0.5) node[left]{$x$} circle (1pt);
    \fill (1.205,0.5) node[left]{$y$} circle (1pt);
    \draw[ultra thick,dotted] (0.5,-0.205)--(0.5,1.205);
    \fill[pattern=north east lines] (0.5,0) to [curve through={(0.3,0.1)..(0.2,0.2)}] (0.2,0.4)--(0.5,0.5)--cycle;
    \fill[opacity=0.5,blue] (0.5,1) to [curve through={(0.25,0.8)..(0.25,0.6)}] (0.2,0.4)--(0.5,0.5)--cycle;
    \draw[thick](0.2,0.4)--(0.5,0.5);
    \draw[dashed](0.5,0) to [curve through={(0.3,0.1)..(0.2,0.2)}] (0.2,0.4);
    \draw[dashed](0.2,0.4) to [curve through={(0.25,0.6)..(0.25,0.8)}] (0.5,1);
    \draw (0.5,-0.1) node[right]{$\pi$};
    \fill[red] (0.49,0.31) node[right]{$z$} circle (1pt);
    \draw (0.2,0.2) node[left]{$v_{G_{t_i}}$};
    \draw[blue] (0.25,0.8) node[left]{$v_{G_{t_{i+1}}}\setminus v_{G_{t_i}}$};
    \end{tikzpicture}
    \caption{Illustration of the proof of Lemma~\ref{lem:gpt}. The cutset $\pi$ separating $x$ and $y$ is shown in the middle. The vertex set in $V$ corresponding to $v_{G_{t_i}}$ and containing $z$ is hatched. The shaded blue region is $v_{G_{t_{i+1}}}\setminus v_{G_{t_i}}$. Their common boundary is thickened and has weight at most $r_{t_i}$ (which is the total boundary of the hatched region, excluding $\pi$). The boundary of $v_{G_{t_{i+1}}}$ in $G_{t_{i+1}}$ is dashed and has weight $r_{t_{i+1}}$.}
    \label{fig:proof_addendum}
\end{figure}

Then the boundary of the vertex set corresponding to $v_{G_{t_{i+1}}}\setminus v_{G_{t_i}}$ in the multi-graph $G_{t_i}$ (which has total vertex weight $m_{t_{i+1}}-m_{t_i}$) fall in one of the categories (see Figure~\ref{fig:proof_addendum}):
\begin{itemize}
    \item edges in $G_{t_i}$ linking $v_{G_{t_i}}$ and $v_{G_{t_{i+1}}}\setminus v_{G_{t_i}}$ or 
    \item edges in $G_{t_{i+1}}\setminus \pi$ incident to $v_{G_{i+1}}$ or 
    \item edges in $\pi$ incident to $v_{G_{t_{i+1}}}\setminus v_{G_{t_i}}$ (but not to $v_{G_{t_i}}$).
\end{itemize}
The total weight of the above three types of edges is at most $r_{t_i}$, $r_{t_{i+1}}$ and $a_{t_{i+1}}-a_{t_i}$ respectively. Therefore,
\[b_{t_{i+1}}-b_{t_i} \overset{\eqref{eq:def:rtbt}}=(r_{t_i}+r_{t_{i+1}}+(a_{t_{i+1}}-a_{t_i}))-2r_{t_i}\ge
\psi^*_\lambda(2^{\lfloor \log_2(m_{t_{i+1}}-m_{t_i}) \rfloor})-2r_{t_i}\ge p_{i}-2r_{t_i}.\]
We consider four types of indices $i$ (recall that $b_{t_i} \geq p_i\geq \phi_\lambda(1)\geq 1$):
\begin{itemize}
  \item If $i<l$ and $r_{t_i}\ge p_{i}/4$, then $\log\frac{b_{t_i}}{\max(r_{t_i},1)} \le \log \frac{b_{t_i}}{r_{t_i}} =\log \big( 1 + \frac{a_{t_i}}{r_{t_i}} \big) \le \frac{a_{t_i}}{r_{t_i}}\le 4\frac{a_{t_i}}{p_{i}}$.
  \item If $i<l$ and $r_{t_i}<p_{i}/4$, then $\log\frac{b_{t_i}}{\max(r_{t_i},1)} \le \log b_{t_i}\le \log(eb_{t_i})\le \frac43a_{t_i}f(b_{t_i})$ and $b_{t_{i+1}}\geq b_{t_i} + p_i/2$.
  \item If $i=l$, and $r_{t_l}\ge b_{t_l}/2$, then $\log\frac{b_{t_i}}{\max(r_{t_i},1)} \le  \log \frac{b_{t_i}}{r_{t_i}}\le 2\frac{a_{t_i}}{b_{t_i}}\le 2a_{t_i}f(b_{t_i})\le 2a_{t_i}f(\phi_\lambda(1))$.
  \item If $i=l$ and $r_{t_l}<b_{t_l}/2$, then $\log\frac{b_{t_i}}{\max(r_{t_i},1)} \le \log b_{t_i}\le \frac 32 a_{t_i}f(b_{t_i})\le 2a_{t_i}f(\phi_\lambda(1))$.
\end{itemize}
Putting these bounds together and using Claim~\ref{claim:computation}, we obtain
\begin{equation}
  \label{eq:gpt:bound}
  \sum_{i=0}^l\frac{1}{a_{t_i}}\log\frac{b_{t_i}}{\max(r_{t_i},1)}\le 4\sum_{i=0}^{l-1}p_{i}^{-1}+2\sqrt{10}\left(\sum_{i=0}^{l-1}p_{i}^{-1}\right)^{1/2} + 2f(\phi_\lambda(1))
  \overset{\eqref{eq:sum:inverse}}\le 
  \frac{\delta}2.\end{equation}
Hence, recalling that $a_t=\sum_{e\in\pi,v_{G_t\in e}}\lambda_e$, we get
\[\sum_t\log\frac{b_t}{\max(r_t,1)}=\sum_t\frac{1}{a_t}\log\frac{b_t}{\max(r_t,1)}\sum_{e\in \pi}\lambda_e\sum_{z\in e}\1_{v_{G_t}=z}\le \delta\sum_{e\in\pi}\lambda_e=\delta\|\pi\|_{\lambda},\]
concluding the proof of Lemma~\ref{lem:gpt}.
\end{proof}

\end{document}